\theoremstyle{plain}
\newtheorem*{Thm*}{Theorem}
\newtheorem*{Corollary*}{Corollary}
\newtheorem*{Proposition*}{Proposition}
\theoremstyle{plain}
\newtheorem{corollary*}[theorem]{Corollary}
\theoremstyle{definition}
 \newtheorem*{ex}{Example}
 \newtheorem{thm}{Theorem}[section]
 \newtheorem{prop}[thm]{Proposition}
 \newtheorem{lem}[thm]{Lemma}
 \newtheorem{cor}[thm]{Corollary}
\theoremstyle{remark}
\numberwithin{equation}{chapter}
\numberwithin{figure}{chapter}
\begin{document}
\frontmatter

\title{New Descriptions of Demazure Tableaux and Right Keys, with Applications to Convexity}
\author{Matthew J. Willis}
\advisor{Robert Proctor}
\readers{Prakash Belkale}{Patrick Eberlein}{Shrawan Kumar}{Richard Rimanyi}
\thesisabstract{The right key of a semistandard Young tableau is a tool used to find Demazure characters for $sl_n(\mathbb{C})$.  This thesis gives methods to obtain the right and left keys by inspection of the semistandard Young tableau.  Given a partition $\lambda$ and a Weyl group element $w$, there is a semistandard Young tableau $Y_\lambda(w)$ of shape $\lambda$ that corresponds to $w$.  The Demazure character for $\lambda$ and $w$ is known to be the sum of the weights of all tableaux whose right key is dominated by $Y_\lambda(w)$.  The set of all such tableaux is denoted $\mathcal{D}_\lambda(w)$.  Exploiting the method mentioned above for obtaining right keys, this thesis describes the entry at each location in any $T \in \mathcal{D}_\lambda(w)$.  Lastly, we will consider $\mathcal{D}_\lambda(w)$ as an integral subset of Euclidean space.  The final results present a condition that is both necessary and sufficient for this subset to be convex.}
\thesisdedication{}

\thesispreface[Acknowledgements]
{I would first like to thank the mathematics faculty at the University of North Carolina at Chapel Hill.  In particular, the members of my committee have been quite helpful.  I am especially grateful to my advisor Robert Proctor for his insight, advice, patience and perseverance throughout this entire process.  My academic life has improved by leaps and bounds thanks to having worked with him these past three years.  Also, the exposition of this thesis has benefited greatly from his comments.

I would also like to thank Sarah Mason for her remarks and encouragement, reading [Mas] led to the creation of the method presented in Chapter 2, whose original proof referred to her Corollary 5.1.  Thanks are also due to Vic Reiner for suggesting the more direct proof given in Section 2.5, and for encouraging the exploration of conditions for convexity.  In addition, both Mark Shimozono and Jim Haglund provided valuable perspective to my advisor through conversations.  Also, beneficial insight was provided by Keith Schneider in regards to counting flagged Schur tableaux, which sparked the beginning of my interest in convex polytopes.

I would also like to thank my fellow graduate students and friends here at UNC, many of whom have helped keep me focused on the important things.  I specifically want to thank Keith, Brandyn, Nate and Emily for their mathematical contributions.

Lastly, I want to thank my family and specifically my parents for their continued support and encouragement over the years, without which I would not have survived this process.}

\begingroup
\topskip0pt
\vspace*{1in}%
\endgroup

\thispagestyle{empty}

\centerline{\Large{\textbf{New Descriptions of Demazure Tableaux and Right}}}
\centerline{\Large{\textbf{Keys, with Applications to Convexity}}}

\vspace{1in}

\centerline{\large{Matthew J. Willis}}

\vspace{1in}

\singlespace

\noindent A dissertation submitted to the faculty of the University of North Carolina at Chapel Hill in partial fulfillment of the requirements for the degree of Doctor of Philosophy in the Department of Mathematics.

\vspace{1in}

\centerline{Chapel Hill}
\centerline{2012}

\vspace{1in}

\doublespacing

\hspace{95mm} Approved by,

\hspace{95mm} Robert Proctor

\hspace{95mm} Prakash Belkale

\hspace{95mm} Patrick Eberlein

\hspace{95mm} Shrawan Kumar

\hspace{95mm} Richard Rimanyi

\newpage

\begingroup
\topskip0pt
\vspace*{1in}%
\endgroup

\centerline{\large{\textbf{Abstract}}}

\vspace{1pc}
\centerline{MATTHEW J. WILLIS:  New Descriptions of Demazure Tableaux and Right Keys,}
\centerline{with Applications to Convexity}
\centerline{(Under the direction of Robert Proctor)}

\vspace{1pc}
\doublespacing
The right key of a semistandard Young tableau is a tool used to find Demazure characters for $sl_n(\mathbb{C})$.  This thesis gives methods to obtain the right and left keys by inspection of the semistandard Young tableau.  Given a partition $\lambda$ and a Weyl group element $w$, there is a semistandard Young tableau $Y_\lambda(w)$ of shape $\lambda$ that corresponds to $w$.  The Demazure character for $\lambda$ and $w$ is known to be the sum of the weights of all tableaux whose right key is dominated by $Y_\lambda(w)$.  The set of all such tableaux is denoted $\mathcal{D}_\lambda(w)$.  Exploiting the method mentioned above for obtaining right keys, this thesis describes the entry at each location in any $T \in \mathcal{D}_\lambda(w)$.  Lastly, we will consider $\mathcal{D}_\lambda(w)$ as an integral subset of Euclidean space.  The final results present a condition that is both necessary and sufficient for this subset to be convex. 

\doublespacing

\currentpdfbookmark{Table of Contents}{tableofcontents}
\tableofcontents

\chapter*{Introduction}
\thispagestyle{empty}

This thesis is separated into three chapters.  In Chapter 1 we give a synopsis of the relevant notation, definitions, conventions, and background that we will use.

A Demazure character of a ``semistandard Lie algebra'' is determined by a ``highest weight'' $\lambda$ and a ``Weyl group'' element $w$.  We will work entirely in case $A_{n-1}$.  This thesis is inspired by the result of Lascoux and Sch{\"u}tzenberger concerning Demazure characters in Case $A_{n-1}$ that appears as Theorem 1 in [RS].  Here the Weyl group of the Lie algebra $sl_n(\mathbb{C})$ is precisely the symmetric group $S_n$.  Theorem 1 of [RS] says that one can obtain the Demazure character here by summing the weights of all ``semistandard Young tableaux'' whose ``right keys'' are dominated by ``the key of $w$''.

Chapter 2 introduces a new method for obtaining the right key of a semistandard Young tableau, called ``the scanning method''.  This method is the first that does not require any combinatorial objects or notions other than ordinary semistandard tableaux.  We will also briefly mention how to obtain the ``left key'' of a semistandard Young tableau.  This chapter has been submitted for publication and posted on the arXiv [Wi].

Chapter 3 then studies the set of all tableaux contributing to a particular Demazure character.  Given such a ``Demazure tableau'' $T$, the first result of Chapter 3 describes the possible entries at each location in $T$ based on the application of the scanning method.  As a result, we establish an entry-wise condition to determine whether or not a given semistandard Young tableau contributes to a particular Demazure character.

In the latter part of Chapter 3 we consider a special kind of permutation $w$, characterized by a certain pattern that may arise in the ``one rowed form'' of a given permutation.  Due to time limitations, for the remainder of Chapter 3 we restrict our attention to $n$-partitions $\lambda$ of the form $(d, ... , d, d-1, ... , 2, 1)$ for some $1 \leq d \leq n$.  (The most important of these $\lambda$'s is the ``staircase shape''; here $d = n$ and $\lambda = (n, n-1, ... , 2, 1)$.)  In Proposition 3.7.1, we present a new description of the set of Demazure tableaux for these $\lambda$ and any 312-avoiding $w$.  As noted in Corollary 3.7.2, it follows that if $w$ is 312-avoiding, then the set of Demazure tableaux is a convex polytope in $\mathbb{Z}^{|\lambda|}$.  In Proposition 3.8.1, we show that, when $\lambda = (n, n-1, ... , 2, 1)$, if $w$ is ``312-containing'', then the set of Demazure tableaux is not a convex polytope.  Theorem 3.9.1 summarizes by stating that the set of Demazure tableaux for $\lambda = (n, n-1, ... , 2, 1)$ is a convex polytope if and only if $w$ is 312-avoiding.

In a forthcoming paper, we will extend Proposition 3.7.1 and show that $w$ being 312-avoiding implies that the set of Demazure tableaux is equal to a set of flagged Schur tableaux.  Thus it will follow that the Demazure character for a 312-avoiding $w$ is equal to a flagged Schur polynomial.  Proposition 3.8.1 already implies that if $w$ is 312-containing, then the set of Demazure tableaux is not equal to any set of flagged Schur tableaux.  Hence the Demazure character for a 312-containing $w$ does not equal any flagged Schur Polynomial.

The two papers mentioned next consider general $\lambda$.  In Theorem 25 of [RS], Reiner and Shimozono provided a necessary and sufficient tableau condition on $w$ characterizing when a Demazure character equals a ``flagged Schur polynomial''.  In Theorem 14.1 of [PS], Postnikov and Stanley replaced this sufficient condition with the more familiar ``312-avoiding'' condition for a permutation.  They also note in Chapter 15 of [PS] that the set of tableaux used to obtain the flagged Schur polynomial is a ``convex polytope'' in $\mathbb{Z}^{l(w)}$, as was observed by Kogan [Ko].  Our results are stronger than those in [RS] and [PS] in that our results are concerned with actual sets of Demazure tableaux, and are not just statements at the level of characters.

%
%
\mainmatter
\chapter{Background}

\section{Introduction}

In this section we will cover the most widespread notation, conventions, and definitions used in this
thesis.  Among the most important definitions are those for ``semistandard Young tableaux'' (Section 1.3), ``Demazure characters''
(Section 1.14), and ``right keys'' (Section 1.17).  In Section 1.3, we provide a tableau description of the well-known ``Schur function''.  In Section 1.18, we give an analogous tableau description for Demazure characters, which appears as Theorem 1 in [RS].  We will also state the ``Demazure Character Formula'' in Section 1.15, and the computational implementation of it from [RS] in Section 1.16.

\section{Partitions and Diagrams}

Throughout this thesis, there is always some fixed $n \geq 1$.  Set $[n] = \{ 1, 2, ... , n-1, n \}$.  Given $p, q \in [n]$, define $[p, q] := \{ p, p+1, ... , q \}$.  Define an \textit{n-partition} to be a sequence $\beta = (\beta_1 , ... , \beta_n)$ of nonnegative integers such that $\beta_1
\geq ... \geq \beta_n \geq 0$.

The \textit{Young diagram of $\beta$} is the diagram with $\beta_i$ left justified boxes in row $i$ for $1 \leq i \leq n$.  To emphasize the importance of columns over rows, the box at the intersection of the $j$th column with the $i$th row is denoted by $(j, i) \in \lambda$.  Note that the number of boxes in the Young diagram of $\beta$ is $|\beta| := \beta_1 + ... + \beta_n =: N$.  Given an $n$-partition $\beta$, define $\zeta := \zeta_\beta := (\zeta_1, ... , \zeta_{\beta_1})$ to be the sequence of column lengths of the Young diagram of $\beta$.

\begin{ex}
Let $n = 5$ and let $\beta = (5,3,2,2,1)$.  Then the Young diagram of $\beta$ is

\centerline{$\ydiagram{5,3,2,2,1}$}

\vspace{1pc}\noindent  Here $\zeta = \zeta_\beta = (5, 4, 2, 1, 1)$.

\end{ex}

Given an $n$-partition $\beta$, consider the sequence of nonnegative integers $\gamma = (\gamma_i)_{i=1}^n$ where $\gamma_i = \beta_i - \beta_{i+1}$ for $1 \leq i \leq n-1$ and $\gamma_n = \beta_n$.  Note that $\beta$ can be recovered from $\gamma$ by setting $\beta_i = \gamma_i + ... + \gamma_n$ for $1 \leq i \leq n$.

\section{Semistandard Tableaux and Schur Functions}

Fix an $n$-partition $\beta$.  An \textit{n - semistandard Young tableau of shape $\beta$} is a filling of the Young diagram of $\beta$ with entries from $[n]$ such that entries are weakly increasing across the rows and strictly increasing down the columns.  Henceforth we more simply write semistandard tableau (of shape $\beta$).  The entry in $T$ at the location $(j, i)$ is denoted $T(j, i)$.  By convention, define $T(0, k) := k$ for $1 \leq k \leq \zeta_1$, and $T(j, \zeta_j + 1) := n+1$ for all $1 \leq j \leq \lambda_1$.  Note that a tableau $T$ is semistandard if and only if $T(j,i) \in [T(j-1,i), T(j,i+1) - 1]$ for all $(j,i) \in \lambda$.  Let $\mathcal{T}_{\beta}$ denote the set of all semistandard tableaux of shape $\beta$.  For $T, T' \in \mathcal{T}_\beta$, we write $T \leq T'$ if and only if $T(j,i) \leq T'(j,i)$ for all $(j,i) \in \lambda$;
this partial ordering is known as \textit{entry-wise comparison}.

The ``Schur function'' is a symmetric polynomial specified by a partition and is defined in (for example) [Mac, p.24].  Given $T \in \mathcal{T}_{\beta}$, let $d_j$ be the number of times the entry $j$ appears in $T$.  Let $t_1, ... , t_n$ be indeterminates.  Define the \textit{weight of $T$} to be $wt(T)$ $:= \prod_{j=1}^n t_j^{d_j}$.  Then define $s_{\beta}(t) := \sum_{T \in \mathcal{T}_{\beta}} wt(T)$.  This sum $s_{\beta}(t)$ is equal to the Schur function specified by $\beta$ [St, p. 310].

\begin{ex}
Let $n = 3$ and $\beta = (2,1,0)$.  Here

$$\mathcal{T}_\beta = \{ \hspace{1mm}
\ytableausetup{centertableaux, boxsize = 1.25em}
\begin{ytableau}
1 & 1 \\
2  \\
\end{ytableau}, \hspace{1mm}
\ytableausetup{centertableaux, boxsize = 1.25em}
\begin{ytableau}
1 & 1 \\
3  \\
\end{ytableau}, \hspace{1mm}
\ytableausetup{centertableaux, boxsize = 1.25em}
\begin{ytableau}
1 & 2 \\
2  \\
\end{ytableau}, \hspace{1mm}
\ytableausetup{centertableaux, boxsize = 1.25em}
\begin{ytableau}
1 & 2 \\
3  \\
\end{ytableau}, \hspace{1mm}
\ytableausetup{centertableaux, boxsize = 1.25em}
\begin{ytableau}
1 & 3 \\
2  \\
\end{ytableau}, \hspace{1mm}
\ytableausetup{centertableaux, boxsize = 1.25em}
\begin{ytableau}
1 & 3 \\
3  \\
\end{ytableau}, \hspace{1mm}
\ytableausetup{centertableaux, boxsize = 1.25em}
\begin{ytableau}
2 & 2 \\
3  \\
\end{ytableau}, \hspace{1mm}
\ytableausetup{centertableaux, boxsize = 1.25em}
\begin{ytableau}
2 & 3 \\
3  \\
\end{ytableau} \hspace{1mm}
\},$$

and hence $s_\beta = t_1^2t_2 + t_1^2t_3 + t_1t_2^2 + 2t_1t_2t_3 + t_1t_3^2 + t_2^2t_3 + t_2t_3^2$.

\end{ex}

\section{A Second Schur Function Description}

It is well known that $n$-partitions index the equivalence classes of irreducible polynomial representations of $GL_n(\mathbb{C})$ [St, p. 442].  Fix an $n$-partition $\beta = (\beta_1, ... , \beta_n)$.  Denote a corresponding representation of $GL_n(\mathbb{C})$ by $\phi_\beta$.  Let ch$\phi(t) \in \mathbb{Z}[t_1,...,t_n]$ denote the character of the restriction of a representation $\phi$ to the diagonal matrices in $GL_n(\mathbb{C})$, which have diagonal entries $t_1, t_2, ... , t_n$.  Then one has ch$\phi_\beta(t) = s_\beta(t)$, the Schur function specified by $\beta$ [St, p. 442].

\section{The Lie Algebra and Its Cartan Subalgebra}

The simple Lie algebra $sl_n (\mathbb{C}) =: \mathfrak{g}$ of trace-free matrices is of type $A_{n-1}$.  Let $\mathfrak{h}$ denote the Cartan subalgebra of $\mathfrak{g}$ consisting of trace-free diagonal matrices.  We depict elements of $\mathfrak{h}$ with column vectors from $\mathbb{R}^n$ whose entries sum to zero.  For example, when $n = 2$, the element $\begin{pmatrix}  x & 0 \\  0 & -x \\ \end{pmatrix}$ $\in \mathfrak{h}$ is written as $(x, -x)^T$.

\section{The Root System and Simple Reflections}

Let $\Phi \subseteq \mathfrak{h}_{\mathbb{R}}^*$ denote the set of roots of $\mathfrak{g}$.  Elements of $\mathbb{R}^{n}$ are depicted with column vectors $(x_1, x_2, ... , x_n)^T$ of length $n$.  We embed $\mathfrak{h}_{\mathbb{R}}^*$ into $\mathbb{R}^n$ in such a way that the simple roots are $\alpha_1 := (1,-1,0,...,0)^T, \alpha_2 := (0,1,-1,0...,0)^T, ... , \alpha_{n-1} := (0,...,0,1,-1)^T $.  For example, when $n = 2$ the evaluation of $\alpha_1$ on $(1/2, -1/2)$ is $1/2 - (-1/2) = 1$.  Define $\Delta := \{ \alpha_1, ... , \alpha_n \}$ to be the \textit{base} of $\Phi$.

For $1 \leq i \leq n-1$, define $s_i$ to be the action of reflecting $\mathbb{R}^n \supseteq \mathfrak{h}_{\mathbb{R}}^*$ in
this depiction with respect to the simple root $\alpha_i$.  (The reflecting of $\mathbb{R}^n$ is done with respect to the dot product on $\mathbb{R}^n$.)  It can be seen that $s_i$ interchanges the entries in the $i$th and $(i+1)$st locations of a column vector (and leaves the other locations unchanged).

For $1 \leq i \leq n$, let $\epsilon_i$ be the vector with a 1 in the $i$th position and zeros elsewhere.  Here $\epsilon_i$ is the coordinate function on $\mathfrak{h}_\mathbb{R}$ that extracts the $i$th entry of diagonal matrices.  The set $\{ \epsilon_1, ... , \epsilon_n \}$ is the \textit{axis basis} of $\mathbb{R}^n$.  Also, we have $\alpha_i = \epsilon_i - \epsilon_{i+1}$.  Note that $s_i(\epsilon_i) = \epsilon_{i+1}$, $s_i(\epsilon_{i+1}) = \epsilon_i$, and $s_i(\epsilon_j) = \epsilon_j$ if $j \neq i, i+1$.  Also note that, as linear functions restricted to $\mathfrak{h}$, the relation $\epsilon_1 + ... + \epsilon_n = 0$ is satisfied.

\section{The Weyl Group}

Let $W$ denote the Weyl group of $\mathfrak{g}$:  this is the subgroup of $GL(\mathfrak{h}_{\mathbb{R}}^*)$ generated by the
$s_i$'s for $1 \leq i \leq n-1$.  Let $w_0$ denote the longest element in $W$.  It is well known in Case $A_{n-1}$ that the length of $w_0$ is $\binom{n}{2}$.  We notate the reduced decomposition of any $w \in W$ by $s_{i_l} \cdots s_{i_1}$ for some $0 \leq l \leq \binom{n}{2}$ and $1
\leq i_1,...,i_l \leq n-1$, i.e. words are read from right to left.

\section{Weights and Highest Weight Modules}

Since $(\alpha_i, \alpha_i) = 2$ for every simple root, in Case $A_{n-1}$ we may describe the \textit{fundamental weights} $\omega_1, ... , \omega_{n-1}$ as being the elements of the dual basis to $\Delta$.  Let $1 \leq p \leq n-1$  Let $r = n - p$;  this number $r$ is the \textit{index reversal} of $p$.  For $1 \leq p \leq n-1$, we have $\omega_p = \frac{1}{n} (r,...,r,-p,...,-p)^T$, where there are $p$ $r$'s and $r$ $-p$'s.  Let $\Lambda  \subseteq \mathfrak{h}_{\mathbb{R}}^*$ denote the $\mathbb{Z}$-span of $\{ \omega_1, ... , \omega_{n-1} \}$.

A typical weight $\lambda \in \Lambda$ is written $\lambda = \sum_{i=1}^{n-1} a_i\omega_i$, where $a_i \in \mathbb{Z}$ for $1 \leq i \leq n-1$.  Recall that a non-zero weight $\lambda \in \Lambda$ is \textit{dominant} if $a_i \geq 0$ for all $i$; let $\Lambda^+$ denote the set of dominant weights.  Dominant weights correspond bijectively with the isomorphism classes of finite dimensional irreducible $\mathfrak{g}$-modules via the notion of  highest weight [Hu, p. 113].  For $\lambda \in \Lambda^+$, let $V(\lambda)$ be an irreducible module of highest weight $\lambda$.  Let $\Pi(\lambda)$ denote the set of weights of $V(\lambda)$ with respect to $\mathfrak{h}$.

\section{Formal Characters, Exponentials, and One Rowed Forms}

Let $\mathbb{Z}[\Lambda]$ denote the group ring of $\Lambda$ over $\mathbb{Z}$.  It is the free $\mathbb{Z}$-module over the elements of $\Lambda$.  For $\mu \in \Lambda$, let $e(\mu)$ denote the element in $\mathbb{Z}[\Lambda]$ associated to $\mu$.  For $\mu, \nu \in \Lambda$, define $e(\mu)e(\nu) := e(\mu + \nu)$.  Fix $\lambda \in \Lambda^+$.  For $\mu \in \Pi(\lambda)$, let $m_\lambda(\mu)$ denote the multiplicity of $\mu$ for $V(\lambda)$.  Define the \textit{formal character of $V(\lambda)$} as $\tilde{\chi}_\lambda := \sum_{\mu \in \Pi(\lambda)}
m_\lambda(\mu)e(\mu)$.

Let $\Lambda '$ denote the $\mathbb{Z}$-span of $\{ \epsilon_1, ... , \epsilon_n \}$ in $\mathbb{R}^n$.  For $1 \leq i \leq n$, let $t_i$ be a \textit{formal exponential} corresponding to $\epsilon_i \in \mathbb{R}^n \supseteq \mathfrak{h}_{\mathbb{R}}^*$.  The formal character may be concretely described by expressing each weight as a product of these formal exponentials $t_1, ... , t_n$.  Let $\mu \in \Lambda '$ and express $\mu = \mu_1 \epsilon_1 + ... + \mu_n \epsilon_n$.  Set $e(\mu) := t^{\mu} = t_1^{\mu_1} \cdots t_n^{\mu_n}$.  Denote the resulting character by $\tilde{\chi}_\lambda (t)$.  Note that $\tilde{\chi}_\lambda(t)$ is not a polynomial as the exponents are rational.

\begin{ex}
Let $n=3$ and $\lambda = \omega_1 + \omega_2 = \frac{1}{3}(2,-1,-1) + \frac{1}{3}(1,1,-2) = (1,0,-1)$.

Here $\Pi(\lambda) = \{ \lambda = \omega_1 + \omega_2, \hspace{.5mm} \omega_1, \hspace{.5mm} \omega_2, \hspace{.5mm} 2 \cdot 0 , -\omega_1, -\omega_2, -\omega_1 - \omega_2 \} = $

$\{ (1,0,-1), (0,1,-1), (1,-1,0), 2 \cdot (0,0,0), (0,-1,1), (-1,1,0), (-1, 0, 1) \}.$

So $\tilde{\chi}_\lambda(t) = t_1t_3^{-1} + t_2t_3^{-1} + t_1t_2^{-1} + 2 + t_2^{-1}t_3 + t_1^{-1}t_2+ t_1^{-1}t_3$.

\end{ex}

The induced action of the $s_i$'s on these formal exponentials is that of the transposition $(i, i+1)$.  For example, let $s_2s_1$ act on each member of the following list of exponentials: $(t_1, t_2, t_3)$.  (Note that this is not a column vector.)  Here we obtain $s_2s_1(t_1, t_2, t_3) = s_2 (t_2, t_1, t_3) = (t_3, t_1, t_2)$.  (Note that, in contrast, the action on a column vector in $\mathbb{R}^n$ is $s_2s_1 (z_1, z_2, z_3)^T = (z_2, z_3, z_1)$.)

Given a permutation $w$, the \textit{one rowed form} of $w$ is the result of letting $w$ act on each member of the list $(1, 2, ... , n)$ via these transpositions.  Denote the result of $w$'s one rowed form as $(x_1, ... , x_n)$; here $x_i$ is the image of $i$ under the action of $w$.  So the one rowed form of $s_2s_1$ is $(3, 1, 2)$.  This action is equivalent to the action $w(t_i) = t_{x_i}$ which appears in [RS, p. 108].  (To achieve the exact statement in [RS], replace the $t$'s above with $x$'s, and replace $x_i$ with $w_i$.)

\section{Relating Schur Functions to Formal Characters}

From the combinatorial description in Section 1.3, we see that $s_\beta(t)$ is a polynomial of homogeneous degree $N = |\beta|$.  One can rewrite this polynomial via $s_\beta(t) = (t_1 \cdots t_n)^{N/n} \tilde{s}_\beta(t)$ for some unique $\tilde{s}_\beta(t)$.  Note that $\tilde{s}_\beta(t)$ is homogeneous of degree zero.  Recall that  for $M \in SL_n(\mathbb{C})$ with eigenvalues $\theta_1, ... , \theta_n$, one has $\theta_1 \cdots \theta_n = 1$.  Thus when a $GL_n(\mathbb{C})$ character is restricted to $SL_n(\mathbb{C})$, the relation $t_1 \cdots t_n = 1$ is satisfied.  Combining these facts, we see that the restriction of $s_\beta(t)$ to $SL_n(\mathbb{C})$ yields $\tilde{s}_\beta(t)$.  Up to equivalence, the irreducible finite dimensional characters of $SL_n(\mathbb{C})$ are exactly the restrictions of the $GL_n(\mathbb{C})$ polynomial characters for the $n$-partitions $\beta$ with $\beta_n = 0$ [St, p. 445].

Let $\gamma$ be the sequence of nonnegative integers associated to $\beta$ from Section 1.2.  Let $\lambda = \sum_{i = 1}^{n-1} \gamma_i \omega_i \in \Lambda^+$.  Then it is known that $\tilde{s}_\beta(t) = \tilde{\chi}_\lambda (t)$.  Equivalently expressed as polynomials, we have $s_\beta(t) = (t_1 \cdots t_n)^{N/n} \tilde{\chi}_\lambda (t)$.  Using this, we can multiply the example $\tilde{\chi}_\lambda (t)$ from Section 1.9 by $(t_1t_2t_3)^{(3/3)} = t_1t_2t_3$ to obtain the example $s_\beta(t)$ from Section 1.2.  We denote the formal character expressed as a polynomial by $\chi_\lambda (t)$.

\section{Translated Fundamental Weights}

For $1 \leq p \leq n-1$, let $\eta_p = (1, ... , 1 , 0 , ... , 0)^T$ where there are $p$ 1's and $r$ 0's.  In case $A_{n-1}$, the vector $\eta_p$ is the translation of $\omega_p$ from $\mathfrak{h}_{\mathbb{R}}^*$ obtained by adding $p/n$ to each coordinate.  Since the vector $(p/n,...,p/n)^T$ is stabilized by all of $W$, it is harmless to use these translated versions $\eta_p$ for $\omega_p$.  Also, since the vector $(p/n,...,p/n)^T$ evaluates to zero on trace-free matrices, it is harmless to use these translations for the evaluation of fundamental weights.  If we express $\mu \in \Lambda$ as $\mu = \sum_{i=1}^{n-1} a_i\eta_i$ and set $e(\mu) = t^\mu$, then $\sum_{\mu \in \Pi(\lambda)} m_\lambda(\mu)e(\mu) = \chi_\lambda (t)$.  Henceforth we will use these translated versions of the fundamental weights so that all characters can be expressed as
polynomials.  Note that when we use the $\eta_p$'s, the elements of $\Lambda^+$ are in one-to-one correspondence with $n$-partitions $\beta$ for which $\beta_n = 0$.  From now on we will use $\lambda$ to denote an arbitrary $n$-partition (instead of $\beta$), and hence $N = |\lambda| = \lambda_1 + ... + \lambda_n$.

\section{Reflecting Weight Spaces}

Let $V$ be a finite dimensional irreducible $\mathfrak{g}$-module.  The module $V$ has at least one maximal vector of uniquely determined weight $\lambda$ for some $\lambda \in \Lambda^+$, and $V \cong V(\lambda)$ [Hu, p. 112].  For $\mu \in \Pi(\lambda)$, let $V_\mu$ denote the weight space of $V$ associated to $\mu$.  For $1 \leq i \leq n-1$, let $x_i, y_i$ denote the Chevalley generators of $\mathfrak{g}$.  Let $\phi : \mathfrak{g} \longrightarrow gl(V)$ denote the representation of $\mathfrak{g}$ afforded by $V$.  Define $s_i^\phi := (exp \hspace{3pt} \phi(x_i))(exp \hspace{3pt} \phi(-y_i))(exp \hspace{3pt} \phi(x_i))$.  Then one has $s_i^\phi (V_\mu) = V_{s_i(\mu)}$ [Hu, p. 119].  This operator $s_i^\phi$ allows us to move from one weight space to a reflected weight space.

\section{Demazure Modules}

Let $B = B(\Delta)$ denote the Borel subalgebra determined by $\Delta$.  Given a semisimple Lie algebra $L$, let $\mathfrak{U}(L)$ denote its universal enveloping algebra.  Fix $\lambda \in \Lambda^+$.  Let $v^+ \in V_\lambda$ denote a maximal vector (of weight $\lambda$), also known as a highest weight vector.  Fix $w \in W$.  Define the \textit{Demazure module of B} associated to $\lambda$ and $w$ to be $D(\lambda,w) := \mathfrak{U}(B).wv^+$.

\section{Demazure Characters}

Here we establish a character for $D(\lambda,w)$ similar to the formal characters defined earlier.  Let $\lambda \in \Lambda^+$ and $w \in W$.  Let $\Pi(\lambda, w)$ denote the set of weights of $D(\lambda,w)$ with respect to $\mathfrak{h}$.  Note that $\Pi(\lambda, w) \subseteq \Pi(\lambda)$.  For $\mu \in \Pi(\lambda, w)$, let $m_{\lambda,w}(\mu)$ denote the multiplicity of $\mu$ in $\Pi(\lambda, w)$.  Then the \textit{Demazure character} of $D(\lambda,w)$ is $\chi_{\lambda, w} := \sum_{\mu \in \Pi(\lambda, w)} m_{\lambda,w}(\mu) e(\mu)$.  By construction, we see that every term in $\chi_{\lambda, w}$ also appears in $\chi_\lambda$.  As for the formal characters of $sl_n(\mathbb{C})$, let $\chi_{\lambda, w}(t)$ denote the Demazure character when it is expressed as a polynomial in $t_1, ..., t_n$ in such a way that each monomial appearing in $\chi_{\lambda, w}(t)$ has degree $N$.

\section{The Demazure Character Formula}

The Demazure Character Formula presented in this section can be found in (for example) [Ku, p. 278]:  Given a simple reflection $s_i$ where $1 \leq i \leq n-1$, define the $\mathbb{Z}$-linear operator $D_{s_i}: \mathbb{Z}[\Lambda] \rightarrow \mathbb{Z}[\Lambda]$ by

$$D_{s_i}(e(\mu)) := \frac{e(\mu) - e(s_i(\mu)-\alpha_i)}{1-e(-\alpha_i)} $$

for $\mu \in \Lambda$.

Given $w \in W$, express $w = s_{i_l}...s_{i_1}$ in terms of simple reflections.  Define $D_w := D_{s_{i_l}} \circ ... \circ D_{s_{i_1}}$.

\begin{thm}
Fix $\lambda \in \Lambda^+$ and $w \in W$.  Then $D_w(e(\lambda)) = \chi_{\lambda, w}$.
\end{thm}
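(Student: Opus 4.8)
The plan is to induct on the length $l(w)$. When $l(w) = 0$ we have $w = e$, the operator $D_w$ is the identity, and $\mathfrak{U}(B).v^+ = \mathbb{C}\, v^+$ since $v^+$ is a highest weight vector; thus $D(\lambda, e)$ is one-dimensional of weight $\lambda$ and $\chi_{\lambda, e} = e(\lambda) = D_e(e(\lambda))$. For the inductive step, fix a reduced decomposition of $w$ and split off its leftmost reflection, writing $w = s_i w'$ with $l(w') = l(w) - 1$. Then $D_w = D_{s_i} \circ D_{w'}$, and the inductive hypothesis gives $D_{w'}(e(\lambda)) = \chi_{\lambda, w'}$; so it is enough to prove the one-step identity $\chi_{\lambda, w} = D_{s_i}(\chi_{\lambda, w'})$.

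To prove this I would pass from characters back to modules. Let $y_i$ be the Chevalley generator paired with $x_i$, and set $\mathfrak{p}_i := B + \mathbb{C}\, y_i$; this is a subalgebra of $\mathfrak{g}$ since $[x_i, y_i] \in \mathfrak{h} \subseteq B$. The first step is to establish
$$D(\lambda, s_i w') \;=\; \mathfrak{U}(\mathfrak{p}_i) \cdot D(\lambda, w').$$
The hypothesis $l(s_i w') > l(w')$ forces $w'\lambda$ to be the top weight of its $\alpha_i$-string in $V(\lambda)$; hence $\phi(x_i)$ kills the generator $w'v^+$ of $D(\lambda, w')$, and unwinding $s_i^\phi = (\exp\phi(x_i))(\exp\phi(-y_i))(\exp\phi(x_i))$ shows that $s_i^\phi(w'v^+)$ is a nonzero multiple of the bottom vector of that same $\alpha_i$-string. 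Combining this with the stability of $D(\lambda, w')$ under $\phi(x_i)$ and the standard fact that $D(\lambda, u)$ is $\mathfrak{p}_i$-stable exactly when $l(s_i u) < l(u)$ (applied with $u = s_i w'$), one sandwiches the two modules and obtains the displayed equality.

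It then remains to compute $\mathrm{ch}\big(\mathfrak{U}(\mathfrak{p}_i) \cdot D(\lambda, w')\big)$. Consider the copy of $sl_2$ generated by $x_i$ and $y_i$, and decompose $V(\lambda)$ into strings for it. The structural input is that $D(\lambda, w')$ — being stable under $\phi(x_i)$ and under $\mathfrak{h}$ — meets each string of a suitable such decomposition either in $0$, or in its top weight line, or in the whole string; applying $\mathfrak{U}(\mathfrak{p}_i)$ then completes every nonzero intersection to the full string that contains it. A short computation, resting on the identity $D_{s_i}(e(\mu)) = e(\mu) + e(\mu - \alpha_i) + \cdots + e(s_i \mu)$ valid whenever $\langle \mu, \alpha_i^\vee\rangle \ge 0$, together with the fact that $D_{s_i}$ fixes the character of a complete $sl_2$-string, shows that this completion operation is exactly $D_{s_i}$ on the level of characters. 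Summing over all strings gives $\chi_{\lambda, w} = D_{s_i}(\chi_{\lambda, w'})$, which closes the induction.

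I expect the main obstacle to be the structural claim just invoked: an arbitrary $B$-stable subspace of $V(\lambda)$ need not meet $sl_2$-strings in this controlled way, so one must genuinely use that $D(\lambda, w')$ is generated by a single extremal weight vector, arguing by a secondary induction on the Bruhat order (equivalently, through the Bott--Samelson ``brick by brick'' construction of $D(\lambda, w)$). This is precisely the place where Demazure's original argument was incomplete; a complete proof runs either through the representation theory of good/excellent filtrations or through the geometry of Schubert varieties (vanishing of higher cohomology of line bundles on Bott--Samelson resolutions, via Frobenius splitting or Kac--Moody methods), and is carried out in detail in [Ku, p. 278].
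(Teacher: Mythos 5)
The paper does not prove this statement at all: it is quoted as background, with the proof deferred to [Ku, p.~278], and only an example of the operators $D_{s_i}$ in action is given. So there is no in-paper argument to compare yours against; the relevant comparison is with the standard proof in the cited literature.

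Measured against that, your sketch is an accurate roadmap of the classical argument: induction on $l(w)$, the reduction to the one-step identity $\chi_{\lambda,s_iw'} = D_{s_i}(\chi_{\lambda,w'})$, the identification $D(\lambda,s_iw') = \mathfrak{U}(\mathfrak{p}_i)\cdot D(\lambda,w')$ using that $l(s_iw')>l(w')$ forces $\langle w'\lambda,\alpha_i^\vee\rangle \ge 0$, and the character computation over $\alpha_i$-strings using $D_{s_i}(e(\mu)) = e(\mu)+e(\mu-\alpha_i)+\cdots+e(s_i\mu)$. But as a self-contained proof it has a genuine gap, and it is exactly the one you flag yourself: the structural claim that $D(\lambda,w')$ meets each string of a suitable $sl_2$-decomposition in $0$, in its top weight line, or in the whole string (equivalently, the $\mathfrak{p}_i$-stability statement governed by the length condition). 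A general $B$-stable, $\phi(x_i)$-stable subspace does not behave this way, and establishing it for Demazure modules is the entire content of the theorem --- this is where Demazure's original argument failed and where the proofs via good filtrations, Frobenius splitting, or the Kac--Moody machinery of [Ku] do their work. Since you state this claim as ``structural input'' and defer it to [Ku] rather than proving it, your write-up is an honest and correct outline, but it stands at the same logical level as the thesis's own treatment: the hard step is cited, not proved.
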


Note:  This Theorem applies to all Lie algebras types, though we will only apply it to type $A_{n-1}$.

Let $D_w(e(\lambda), t)$ denote $D_w(e(\lambda))$ expressed as a polynomial in $t_1, ... , t_n$.

\begin{ex}

Let n = 3, $\lambda = \eta_1 + \eta_2 = (2, 1, 0)$, and $w = s_2s_1$.

First we have $s_1(\lambda) - \alpha_1 = s_1(2, 1, 0) - (1, -1, 0) = (1, 2, 0) - (1, -1, 0) = (0, 3, 0)$.  So

$D_{s_1}(e(\lambda), t) = D_{s_1}(t_1^2t_2) = \frac{t_1^2t_2 - t_2^3}{1-t_1^{-1}t_2} = t_1^2t_2 + t_1t_2^2$.  Then

$D_{s_2}(t_1^2t_2) = \frac{t_1^2t_2 - t_1^2t_2^{-1}t_3^2}{1-t_2^{-1}t_3} = t_1^2t_2 + t_1^2t_3$, and

$D_{s_2}(t_1t_2^2) = \frac{t_1t_2^2 - t_1t_2^{-1}t_3^3}{1-t_2^{-1}t_3} = t_1t_2^2 + t_1t_2t_3 + t_1t_3^2$.  Therefore

$D_w(e(\lambda), t) = t_1^2t_2 + t_1^2t_3 + t_1t_2t_3 + t_1t_2^2 + t_1t_3^2$.

\end{ex}

\section{Key Polynomials}

The following definitions come from Chapter 2 of [RS].  For $1 \leq i \leq n-1$, define the linear operator $\delta_i: \mathbb{Z}[t_1,...,t_n] \rightarrow \mathbb{Z}[t_1,...,t_n]$ by

$$\delta_i t^\mu := \frac{t^\mu-s_i t^\mu}{t_i-t_{i+1}}$$

for $\mu \in \Lambda$.

Also define $\pi_i t^\mu:= \delta_i ( t_i t^\mu )$.  Given a permutation $w = s_{i_l}...s_{i_1}$, define $\pi_w := \pi_{i_l} \circ ... \circ \pi_{i_1}$.  Let $\theta = (\theta_1, ... , \theta_n)$ be a sequence of non-negative integers in $\mathbb{R}^n$.  Let $\lambda_\theta$ be the unique partition obtained by rearranging the entries of $\theta$.  Let $w_\theta$ be the shortest permutation that acts on the left by moving entries according to their locations (as opposed to according to their values) to obtain $\lambda_\theta = w_\theta \theta$.  Then define the \textit{key polynomial} $\kappa_\theta (t) := \pi_{w_\theta} t^{\lambda_\theta}$.

\begin{prop}
Let $\theta$ be a sequence of non-negative integers.  Then $\kappa_\theta (t) = D_{w_\theta} (e(\lambda_\theta), t )$.
\end{prop}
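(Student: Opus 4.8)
The plan is to establish the proposition by reducing to the single--reflection case: I will show that, under the identification $e(\mu)\leftrightarrow t^{\mu}$ used in Sections 1.11--1.12 to express formal characters as polynomials, each Demazure operator $D_{s_i}$ of Section 1.15 coincides with the operator $\pi_i$ of Section 1.16. Granting this, the proposition is immediate: fixing a reduced decomposition $w_\theta=s_{i_l}\cdots s_{i_1}$, the operators $D_{w_\theta}=D_{s_{i_l}}\circ\cdots\circ D_{s_{i_1}}$ and $\pi_{w_\theta}=\pi_{i_l}\circ\cdots\circ\pi_{i_1}$ are built from matching single--reflection operators in the same (right--to--left) order, so applying them to the monomial $e(\lambda_\theta)=t^{\lambda_\theta}$ gives $D_{w_\theta}(e(\lambda_\theta),t)=\pi_{w_\theta}t^{\lambda_\theta}=\kappa_\theta(t)$.

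The crux is therefore the identity $D_{s_i}(e(\mu))=\pi_i t^{\mu}$, which I would prove by a direct computation using no special property of $\mu$. Since $\alpha_i=\epsilon_i-\epsilon_{i+1}$, the substitution $e(\nu)\mapsto t^{\nu}$ sends $e(-\alpha_i)$ to $t_i^{-1}t_{i+1}$ and $e(s_i(\mu)-\alpha_i)$ to $(s_i t^{\mu})\,t_i^{-1}t_{i+1}$, where $s_i$ acts on exponents by transposing positions $i$ and $i+1$. Clearing the factor $t_i^{-1}$ from numerator and denominator gives
$$D_{s_i}(e(\mu))=\frac{t^{\mu}-(s_i t^{\mu})\,t_i^{-1}t_{i+1}}{1-t_i^{-1}t_{i+1}}=\frac{t_i t^{\mu}-t_{i+1}\,(s_i t^{\mu})}{t_i-t_{i+1}}.$$
On the other hand, $\pi_i t^{\mu}=\delta_i(t_i t^{\mu})=\dfrac{t_i t^{\mu}-s_i(t_i t^{\mu})}{t_i-t_{i+1}}$, and since $s_i(t_i t^{\mu})=t_{i+1}\,(s_i t^{\mu})$, the two expressions agree. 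I would also record the standard fact that $\pi_i$ carries a polynomial with nonnegative exponents to another such polynomial; because $e(\lambda_\theta)=t^{\lambda_\theta}$ is a genuine monomial (this is exactly what the translated fundamental weights of Section 1.12 were introduced to guarantee), this property persists through the whole composition, so $\kappa_\theta(t)$ and $D_{w_\theta}(e(\lambda_\theta),t)$ really are being compared inside $\mathbb{Z}[t_1,\dots,t_n]$.

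The rest is bookkeeping about conventions rather than a substantive obstacle: one checks that the $s_i$ acting on weights inside $D_{s_i}$, the $s_i$ acting on exponents inside $\delta_i$, and the transpositions defining the sorting permutation $w_\theta$ are all the same operation (interchanging positions $i$ and $i+1$), and that the composition orders match, all of which is forced by the conventions of Sections 1.7--1.9 and 1.16. Since the single--reflection identity applies to each monomial produced at every stage of the composition, an induction on $l$ finishes the proof. (If one wants the proposition to be well--posed independently of the reduced word chosen, one additionally invokes the braid relations satisfied by the $\pi_i$ and by the $D_{s_i}$; for the equality as stated, a single common reduced decomposition suffices.) Combined with Theorem 1.15.1, the result then identifies $\kappa_\theta(t)$ with the Demazure character $\chi_{\lambda_\theta,w_\theta}(t)$.
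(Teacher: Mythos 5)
Your proposal is correct and follows essentially the same route as the paper: reduce to the single-reflection identity $D_{s_i}(e(\mu),t)=\pi_i t^{\mu}$ for arbitrary $\mu$, verify it by the direct monomial computation (writing $t^{s_i(\mu)-\alpha_i}=(s_i t^{\mu})t_i^{-1}t_{i+1}$ and clearing the factor of $t_i$), and then compose along a reduced decomposition of $w_\theta$. Your extra remarks on polynomiality and reduced-word independence are harmless additions, not a different argument.
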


\begin{proof}
It suffices to show that if $\mu \in \Lambda$, then $D_{s_i}(e(\mu), t) = \pi_i (t^\mu)$.

$$D_{s_i}(e(\mu), t) = \frac{t^\mu - t^{s_i(\mu) - \alpha_i}}{1-t^{-\alpha_i}}
= \frac{t^\mu - t^{(\mu_1, \dots, \mu_{(i-1)}, (\mu_{(i+1)}-1),( \mu_i +1), \mu_{(i+2)} ,\dots, \mu_n)}}{1-t^{-\epsilon_i +\epsilon_{(i+1)}}}$$
$$= \frac{t_1^{\mu_1} \cdots t_n^{\mu_n} - t_1^{\mu_1} \cdots t_{i-1}^{\mu_{(i-1)}} (t_i^{\mu_{(i+1)}-1}) (t_{i+1}^{\mu_i + 1}) t_{i+2}^{\mu_{(i+2)}} \cdots t_n^{\mu_n}}{1-t_i^{-1}t_{i+1}}$$
$$=\frac{t_1^{\mu_1} \cdots t_{i-1}^{\mu_{(i-1)}} (t_i^{\mu_i+1}) t_{i+1}^{\mu_{(i + 1)}} \cdots t_n^{\mu_n} - t_1^{\mu_1} \cdots t_{i-1}^{\mu_{(i-1)}} (t_i^{\mu_{(i+1)}}) (t_{i+1}^{\mu_i + 1}) t_{i+2}^{\mu_{(i+2)}} \cdots t_n^{\mu_n} }{t_i - t_{i+1}} $$
$$=\frac{t_i t^\mu - s_i(t_1^{\mu_1} \cdots t_{i-1}^{\mu_{(i-1)}} (t_i^{\mu_i +1}) t_{i+1}^{\mu_{(i+1)}}) t_{i+2}^{\mu_{(i+2)}} \cdots t_n^{\mu_n})}{t_i - t_{i+1}}$$
$$=\frac{t_it^\mu - s_i(t_it^\mu)}{t_i-t_{i+1}} = \pi_i (t^\mu).$$

\end{proof}

It is a consequence of the definition of $\pi_i$ that if $a_i \geq a_{i+1}$, then  $\pi_i(t_1^{a_1} \cdots t_n^{a_n}) = t_1^{a_1} \cdots t_{i-1}^{a_{i-1}}t_{i+2}^{a_{i+2}} \cdots t_n^{a_n}(t_i^{a_i}t_{i+1}^{a_{i+1}} + t_i^{a_i - 1}t_{i+1}^{a_{i+1} + 1} + \dots + t_i^{a_{i+1}}t_{i+1}^{a_i})$.  (This is equivalent to Lemma 8.2.8 in [Ku, p. 279].)

\begin{ex}  Let $n=3$ and $\theta = (1, 0, 2)$.

Then $\lambda_\theta = (2, 1, 0)$, and $w_\theta = s_2s_1$.

Thus $\kappa_\theta (t) = \pi_2 \pi_1 (t_1^2t_2) = \pi_2 (t_1^2t_2 + t_1t_2^2) = t_1^2t_2 + t_1^2t_3 + t_1t_2^2 + t_1t_2t_3 +
t_1t_3^2$.

\end{ex}

\section{The Right Key}

A semistandard tableau $T \in \mathcal{T}_\lambda$ is a \textit{key} if the entries in the $j$th column of $T$ also appear in
the $(j-1)$st column for $1 < j \leq \lambda_1$.  Given two $n$-partitions $\lambda$ and $\beta$ such that $\beta_i \leq \lambda_i$ for $1 \leq i \leq n$, the \textit{skew diagram of shape $\lambda \backslash \beta$} is obtained by removing the Young diagram of $\beta$ from the Young diagram of $\lambda$.  Recall that $\zeta_\lambda$ is the sequence of column lengths of the Young diagram of shape $\lambda$.  Note that the length of the $j$th column in the skew diagram of shape $\lambda \backslash \beta$ is ${\zeta_\lambda}_j - {\zeta_\beta}_j$ for $1 \leq j \leq \lambda_1$.  A \textit{skew tableau} is a filling of a skew diagram with entries from $[n]$ such that the entries weakly increase along the rows and strictly increase down the columns.

Now we assume familiarity with the jeu de taquin (JDT) process,  as in (for example) Chapter 1 of [Fu].  The \textit{rectification} of a skew tableau [Fu, p. 15] is the tableau that results from successively applying ``JDT slides'' to the ``inside corners'' of the skew tableau (in any order) until there are no inside corners left.  A skew tableau is \textit{frank} if its column lengths are a rearrangement of the column lengths of its rectification.  Given $T \in \mathcal{T}_\lambda$, the \textit{right key of $T$}, denoted $R(T)$, is a key of shape $\lambda$ defined
column-wise as follows:  For $1 \leq j \leq \lambda_1$, the $j$th column of $R(T)$ is equal to the rightmost column of any frank skew tableau that
rectifies to $T$ and whose rightmost column has length ${\zeta_\lambda}_j$.  Given $T \in \mathcal{T}_\lambda$, its right key $R(T)$ is well-defined [Fu, p. 209].  For further details, including an example and two specific ways to compute right keys, see Sections 2, 3, and 4 of Chapter 2.

\section{A Tableau Description for Key Polynomials and Demazure Characters}

Given a sequence of non-negative integers $\theta = (\theta_1, ... , \theta_n)$, the \textit{key of $\theta$}, denoted $Y_{\lambda_\theta}(\theta)$, is the key of shape $\lambda_\theta$ whose first $\theta_j$ columns contain $j$ for $1 \leq j \leq {\lambda_\theta}_1$.  The following result is Theorem 1 of [RS]:

\begin{prop}
$\kappa_\theta (t) = \sum_{ \{ T \in \mathcal{T}_{\lambda_\theta} | R(T) \leq Y_{\lambda_\theta}(\theta) \} } wt(T)$.
\end{prop}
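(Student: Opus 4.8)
Write $\sigma_\theta := \sum_{\{T \in \mathcal{T}_{\lambda_\theta}\,:\,R(T)\le Y_{\lambda_\theta}(\theta)\}} wt(T)$ for the right-hand side. The plan is to prove $\sigma_\theta = \kappa_\theta$ by induction on $\ell(w_\theta)$; by Proposition 1.16.1 this is the same as proving $\sigma_\theta = \pi_{w_\theta}t^{\lambda_\theta}$. In the base case $\ell(w_\theta) = 0$ the sequence $\theta = \lambda$ is a partition, so $w_\theta = e$ and $\pi_{w_\theta}t^{\lambda} = t^{\lambda}$. By construction the $j$th column of $Y_\lambda(\lambda)$ is $\{1, 2, \dots, (\zeta_\lambda)_j\}$, so $Y_\lambda(\lambda)$ is the entrywise-minimum of $\mathcal{T}_\lambda$ (every column of a semistandard tableau is strictly increasing with smallest possible entry $1$). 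Since $R(T) \in \mathcal{T}_\lambda$, the constraint $R(T) \le Y_\lambda(\lambda)$ forces $R(T) = Y_\lambda(\lambda)$, whereupon the standard inequality $T \le R(T)$ forces $T = Y_\lambda(\lambda)$; as $wt(Y_\lambda(\lambda)) = t^{\lambda}$, the base case holds.

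For the inductive step, suppose $\ell(w_\theta) = m > 0$; then $\theta$ is not weakly decreasing, so pick an index $i$ with $\theta_i < \theta_{i+1}$ and put $\theta' := s_i\theta$ (the sequence obtained by interchanging the $i$th and $(i+1)$st entries of $\theta$). Then $\lambda_{\theta'} = \lambda_\theta =: \lambda$, and a direct inversion count gives $\ell(w_{\theta'}) = m-1$ with $w_\theta = s_i w_{\theta'}$ a reduced factorization; hence $\kappa_\theta = \pi_i\kappa_{\theta'}$, using Proposition 1.16.1, the definition of $D_w$, and the identity $D_{s_i}(e(\mu),t) = \pi_i t^\mu$ from the proof of Proposition 1.16.1. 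By the inductive hypothesis $\kappa_{\theta'} = \sigma_{\theta'}$, so it remains to prove the combinatorial identity $\pi_i\,\sigma_{\theta'} = \sigma_\theta$. Put $\mathcal{D}_{\theta'} := \{T : R(T)\le Y_\lambda(\theta')\}$ and $\mathcal{D}_\theta := \{T : R(T)\le Y_\lambda(\theta)\}$, let $B(\lambda)$ be the set $\mathcal{T}_\lambda$ equipped with the usual crystal lowering operators $f_j$ (along $f_i$ the weight drops by $\alpha_i$), and consider the $i$-strings, i.e. the maximal chains $T_0, f_iT_0, f_i^2T_0, \dots, T_\ell$ with $T_0$ being $i$-highest. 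I would establish two facts. First, $Y_\lambda(\theta)$ is obtained from $Y_\lambda(\theta')$ by the $i$th elementary key operation: since $\theta'_i \ge \theta'_{i+1}$, precisely the columns $\theta_i+1, \dots, \theta_{i+1}$ carry the entry $i+1$ in $Y_\lambda(\theta)$ where they carry $i$ in $Y_\lambda(\theta')$ (all other columns being identical), and together with the behaviour of the right key under $f_i$ this yields $\mathcal{D}_\theta = \bigcup_{T\in\mathcal{D}_{\theta'}}\{T, f_iT, f_i^2T, \dots\}$, the $f_i$-saturation of $\mathcal{D}_{\theta'}$. Second, the inequality $\theta'_i\ge\theta'_{i+1}$ forces every $i$-string of $B(\lambda)$ to meet $\mathcal{D}_{\theta'}$ either not at all, or in its $i$-highest tableau $T_0$ alone, or in its entirety. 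Granting these two facts, the identity $\pi_i\sigma_{\theta'} = \sigma_\theta$ is immediate: for a monomial $t^\mu$ with $\mu_i \ge \mu_{i+1}$ one has $\pi_i t^\mu = t^\mu + t^{\mu-\alpha_i} + \dots + t^{s_i\mu}$, the sum of the weights along the whole $i$-string through $\mu$, so $\pi_i$ replaces the contribution of an $i$-highest singleton $\{T_0\}\subseteq\mathcal{D}_{\theta'}$ by the sum over its full $i$-string and, by $\pi_i^2 = \pi_i$, leaves the contribution of a full $i$-string unchanged; summing over all $i$-strings reproduces $\sigma_\theta$ exactly.

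The genuine obstacle is the pair of facts in the inductive step, and especially the second one: one must show that $\{T : R(T) \le Y_\lambda(\theta')\}$ — defined here through frank skew tableaux and jeu de taquin — has precisely the crystal-theoretic shape it needs, so that one application of $\pi_i$ produces the next Demazure tableau set rather than something with unwanted multiplicities. This comes down to controlling $R(T)$ and $R(f_iT)$ as $T$ ranges over an $i$-string, which is the substantive content of the Lascoux--Sch{\"u}tzenberger correspondence; the operator bookkeeping of the base case and of the final calculation is routine by comparison. One could, alternatively, sidestep the combinatorics entirely by invoking the known identification of $\{T : R(T) \le Y_\lambda(\theta)\}$ with a weight basis of the Demazure module $D(\lambda, w_\theta)$ and then appealing to the Demazure Character Formula together with Proposition 1.16.1 — but this presupposes the very substance of the statement.
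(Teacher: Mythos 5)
The paper never proves this proposition itself: it is quoted as Theorem 1 of [RS] (due to Lascoux and Sch\"utzenberger), so there is no in-paper argument to measure yours against. Judged on its own terms, your proposal is a reasonable outline of the standard inductive strategy, but it contains a genuine gap which you yourself flag: the two crystal-theoretic facts in the inductive step are asserted, not proved. The base case, the recursion $\kappa_\theta = \pi_i\kappa_{\theta'}$ for $\theta = s_i\theta'$ with $\theta'_i > \theta'_{i+1}$ (modulo a small check that your reduced factorization matches the paper's convention so that $\pi_{w_\theta} = \pi_i\circ\pi_{w_{\theta'}}$), and the closing computation using $\pi_i t^\mu = t^\mu + t^{\mu-\alpha_i} + \dots + t^{s_i\mu}$ and $\pi_i^2 = \pi_i$ are indeed routine. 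What is not routine is (a) that $\{T : R(T)\le Y_\lambda(\theta)\}$ is exactly the $f_i$-saturation of $\{T : R(T)\le Y_\lambda(\theta')\}$, and (b) the string property, that every $i$-string meets $\{T : R(T)\le Y_\lambda(\theta')\}$ in nothing, in its $i$-highest element alone, or in its entirety. Both require controlling how the right key, defined here through frank skew tableaux and jeu de taquin, transforms under the coplactic operator $f_i$, and that is precisely the substance of the theorem you are being asked to prove. Without (b) the final computation collapses: a string could a priori meet $\mathcal{D}_{\theta'}$ in a proper initial segment of length at least two, and then applying $\pi_i$ to its weight sum does not return the full-string sum; without (a), even granting (b), there is no reason $\pi_i\sigma_{\theta'}$ should equal $\sigma_\theta$.

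Your proposed escape route, identifying $\{T : R(T)\le Y_\lambda(\theta)\}$ with a weight basis of the Demazure module and invoking the Demazure Character Formula, is, as you concede, circular for this purpose. So what you have is a correct plan rather than a proof. To complete it you would need to establish (a) and (b) directly, for instance by tracking $R(T)$ against $R(f_iT)$ column by column (the scanning description of Chapter 2 is a plausible tool for exactly this kind of analysis), or else simply cite [LS] or Theorem 1 of [RS], which is what the paper does.
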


Given an $n$-partition $\lambda$ and a permutation $w$ expressed in one rowed notation $(x_1, ... , x_n)$, the \textit{key of $w$ (of shape $\lambda$)}, denoted $Y_\lambda(w)$, is the key whose columns of length $k$ consist of $x_1, x_2, ... , x_k$ arranged in increasing order, for all $1 \leq k \leq n$.  It can be seen that $Y_{\lambda_\theta}(\theta) = Y_{\lambda_\theta}(w_\theta)$.

Given $\lambda \in \Lambda^+$ and $w \in W$, a \textit{Demazure tableau of shape $\lambda$ for $w$} is a semistandard tableau $T$ of shape $\lambda$ such that $R(T) \leq Y_\lambda(w)$.  We set $\mathcal{D}_\lambda(w) := \{ T \in \mathcal{T}_\lambda \hspace{1mm} | \hspace{1mm} R(T) \leq Y_\lambda(w) \}$.  Using Proposition 1.16.1, we can restate Proposition 1.18.1:

\begin{thm}
$\chi_{\lambda, w} (t) = \sum_{T \in \mathcal{D}_\lambda(w)} wt(T)$.
\end{thm}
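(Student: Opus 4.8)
The plan is to deduce the theorem from Proposition 1.18.1 (Theorem 1 of [RS]) together with Proposition 1.16.1 and Theorem 1.15.1. The only subtlety is that Proposition 1.18.1 is phrased in terms of a sequence $\theta$ and its shortest sorting permutation $w_\theta$, and for a general $w \in W$ there need not be any $\theta$ with $w_\theta = w$; so one must first note that both sides of the asserted identity are unchanged when $w$ is moved within a suitable coset, and then choose $\theta$ so that $w_\theta$ lands in that coset.

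Set $W_\lambda := \langle\, s_i : \lambda_i = \lambda_{i+1}\,\rangle$, the stabilizer of $\lambda$ in $W$. Given $w$, I would take $\theta$ to be the rearrangement of $\lambda$ that $w$ sorts, namely $\theta_i := \lambda_{w^{-1}(i)}$, so that $w\theta = \lambda$ in the convention of Section 1.16 and hence $\lambda_\theta = \lambda$. Tracking that convention one checks that the permutations $v$ with $v\theta = \lambda$ are exactly the elements of the coset $wW_\lambda$; consequently $w_\theta$ is the minimal length element of $wW_\lambda$, and one may write $w = w_\theta u$ with $u \in W_\lambda$ and $\ell(w) = \ell(w_\theta) + \ell(u)$.

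Now I would match each side of the asserted identity to $\kappa_\theta(t)$. For the left side: by Theorem 1.15.1, $\chi_{\lambda,w}(t) = D_w(e(\lambda),t)$, and the computation in the proof of Proposition 1.16.1 shows $D_{s_i}(e(\mu),t) = \pi_i(t^\mu)$, so $D_w(e(\lambda),t) = \pi_w(t^\lambda)$. Using the reduced word for $w$ obtained by concatenating reduced words for $w_\theta$ and for $u$, we have $\pi_w = \pi_{w_\theta}\circ\pi_u$; a reduced word for $u \in W_\lambda$ uses only generators $s_i$ with $\lambda_i = \lambda_{i+1}$, and the $a_i = a_{i+1}$ case of the displayed identity for $\pi_i(t_1^{a_1}\cdots t_n^{a_n})$ following the proof of Proposition 1.16.1 gives $\pi_i(t^{\lambda_\theta}) = t^{\lambda_\theta}$ whenever $(\lambda_\theta)_i = (\lambda_\theta)_{i+1}$, so applying these one letter at a time gives $\pi_u(t^{\lambda_\theta}) = t^{\lambda_\theta}$. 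Hence $\chi_{\lambda,w}(t) = \pi_{w_\theta}(t^{\lambda_\theta}) = \kappa_\theta(t)$. For the right side: Proposition 1.18.1 gives $\kappa_\theta(t) = \sum_{\{T \in \mathcal{T}_{\lambda_\theta}\,:\,R(T) \le Y_{\lambda_\theta}(\theta)\}} wt(T)$, which since $Y_{\lambda_\theta}(\theta) = Y_{\lambda_\theta}(w_\theta)$ and $\lambda_\theta = \lambda$ equals $\sum_{\{T \in \mathcal{T}_\lambda\,:\,R(T) \le Y_\lambda(w_\theta)\}} wt(T)$. Finally $Y_\lambda(w_\theta) = Y_\lambda(w)$, because right multiplication of a permutation by $s_i$ transposes the entries in positions $i$ and $i+1$ of its one rowed form, altering only the prefix set $\{x_1,\dots,x_i\}$, and $s_i \in W_\lambda$ forces $\lambda_i = \lambda_{i+1}$, so $\lambda$ has no column of length $i$ and the key is unchanged; iterating along a reduced word for $u$ gives the claim. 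Combining, $\chi_{\lambda,w}(t) = \sum_{T \in \mathcal{D}_\lambda(w)} wt(T)$.

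The step I expect to be the main obstacle is the one in the second paragraph: identifying precisely the permutations that sort $\theta$ to $\lambda$, and checking that $w_\theta$ is the minimal length representative of $wW_\lambda$, all while keeping the (right) action on sequences from Section 1.16 consistent with the action on one rowed forms that defines $Y_\lambda$. Once that coset picture is in place, the two invariance statements used above — that $\pi_u$ fixes $t^{\lambda_\theta}$ for $u \in W_\lambda$, and that $Y_\lambda$ is invariant under right multiplication by $W_\lambda$ — are each only a couple of lines, and what remains is assembling the cited identities.
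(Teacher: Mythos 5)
Your proposal is correct and follows the same route the paper takes: the paper derives this theorem in one line by combining Theorem 1.15.1, Proposition 1.16.1, and Proposition 1.18.1 together with the identification $Y_{\lambda_\theta}(\theta) = Y_{\lambda_\theta}(w_\theta)$. The only difference is that you explicitly supply the coset bookkeeping (choosing $\theta$ with $\lambda_\theta = \lambda$, noting $w = w_\theta u$ with $u \in W_\lambda$, and checking that both $\pi_w(t^\lambda)$ and $Y_\lambda(w)$ depend only on $wW_\lambda$), which the paper leaves implicit; your verification of these points is sound and consistent with the paper's conventions.
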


The set $\mathcal{D}_\lambda(w)$ is called the \textit{set of Demazure tableaux (for $\lambda$ and $w$)}.

\begin{ex}
Let $n = 3$, $\lambda = \eta_1 + \eta_2 = (2,1,0)$, and $w = s_2s_1$.  The one rowed form of $w$ is $(3, 1, 2)$.

$$\mathcal{T}_\lambda = \{ \hspace{1mm}
\ytableausetup{centertableaux, boxsize = 1.25em}
\begin{ytableau}
1 & 1 \\
2  \\
\end{ytableau}, \hspace{1mm}
\ytableausetup{centertableaux, boxsize = 1.25em}
\begin{ytableau}
1 & 1 \\
3  \\
\end{ytableau}, \hspace{1mm}
\ytableausetup{centertableaux, boxsize = 1.25em}
\begin{ytableau}
1 & 2 \\
2  \\
\end{ytableau}, \hspace{1mm}
\ytableausetup{centertableaux, boxsize = 1.25em}
\begin{ytableau}
1 & 2 \\
3  \\
\end{ytableau}, \hspace{1mm}
\ytableausetup{centertableaux, boxsize = 1.25em}
\begin{ytableau}
1 & 3 \\
2  \\
\end{ytableau}, \hspace{1mm}
\ytableausetup{centertableaux, boxsize = 1.25em}
\begin{ytableau}
1 & 3 \\
3  \\
\end{ytableau}, \hspace{1mm}
\ytableausetup{centertableaux, boxsize = 1.25em}
\begin{ytableau}
2 & 2 \\
3  \\
\end{ytableau}, \hspace{1mm}
\ytableausetup{centertableaux, boxsize = 1.25em}
\begin{ytableau}
2 & 3 \\
3  \\
\end{ytableau} \hspace{1mm}
\}.$$

The corresponding right keys of these tableaux are
$$\{ \hspace{1mm}
\ytableausetup{centertableaux, boxsize = 1.25em}
\begin{ytableau}
1 & 1 \\
2  \\
\end{ytableau}, \hspace{1mm}
\ytableausetup{centertableaux, boxsize = 1.25em}
\begin{ytableau}
1 & 1 \\
3  \\
\end{ytableau}, \hspace{1mm}
\ytableausetup{centertableaux, boxsize = 1.25em}
\begin{ytableau}
1 & 2 \\
2  \\
\end{ytableau}, \hspace{1mm}
\ytableausetup{centertableaux, boxsize = 1.25em}
\begin{ytableau}
2 & 2 \\
3  \\
\end{ytableau}, \hspace{1mm}
\ytableausetup{centertableaux, boxsize = 1.25em}
\begin{ytableau}
1 & 3 \\
3  \\
\end{ytableau}, \hspace{1mm}
\ytableausetup{centertableaux, boxsize = 1.25em}
\begin{ytableau}
1 & 3 \\
3  \\
\end{ytableau}, \hspace{1mm}
\ytableausetup{centertableaux, boxsize = 1.25em}
\begin{ytableau}
2 & 2 \\
3  \\
\end{ytableau}, \hspace{1mm}
\ytableausetup{centertableaux, boxsize = 1.25em}
\begin{ytableau}
2 & 3 \\
3  \\
\end{ytableau} \hspace{1mm}
\}.$$

Also, $Y_\lambda(w) = \ytableausetup{centertableaux, boxsize = 1.25em}
\begin{ytableau}
1 & 3 \\
3  \\
\end{ytableau}$.

Hence

$$\mathcal{D}_\lambda(w) = \{ \hspace{1mm}
\ytableausetup{centertableaux, boxsize = 1.25em}
\begin{ytableau}
1 & 1 \\
2  \\
\end{ytableau}, \hspace{1mm}
\ytableausetup{centertableaux, boxsize = 1.25em}
\begin{ytableau}
1 & 1 \\
3  \\
\end{ytableau}, \hspace{1mm}
\ytableausetup{centertableaux, boxsize = 1.25em}
\begin{ytableau}
1 & 2 \\
2  \\
\end{ytableau}, \hspace{1mm}
\ytableausetup{centertableaux, boxsize = 1.25em}
\begin{ytableau}
1 & 3 \\
2  \\
\end{ytableau}, \hspace{1mm}
\ytableausetup{centertableaux, boxsize = 1.25em}
\begin{ytableau}
1 & 3 \\
3  \\
\end{ytableau} \hspace{1mm}
\},$$

and $\chi_{\lambda, w}(t) = t_1^2t_2 + t_1^2t_3 + t_1t_2^2 + t_1t_2t_3 + t_1t_3^2$.

\end{ex}

\chapter{A Direct Way to Find the Right Key of a Semistandard Young Tableau}

\section{Introduction}

A key is a semistandard tableau such that the entries in each column also appear in each column to the left.  If a semistandard tableau $T$ is not a
key itself, then the right (``left'') key of $T$ is a slightly greater (lesser) tableau (via entrywise comparison) that is associated to $T$.

Perhaps the foremost result that uses the notion of the right key of a tableau is the summation formula of Section 1.18 for Demazure characters due
to Lascoux and Sch{\"u}tzenberger [LS].  Recall the well known description of Schur functions from Section 1.2:  Given a partition $\lambda$, the Schur function $s_\lambda$ is equal to the sum of the weights of all semistandard tableaux of shape $\lambda$.  More generally, given a highest weight $\mu$ and a permutation $w$, the formula of Lascoux and Sch{\"u}tzenberger says that the Demazure character (or key polynomial) determined by $\mu$ and $w$ of a ``Borel subalgebra'' of $sl_n(\mathbb{C})$ is equal to the sum of the weights of all semistandard tableaux of shape $\mu$ whose right keys are less than or equal to the key corresponding to $w$.

In this chapter, given a semistandard tableau $T \in \mathcal{T}_\lambda$, we introduce a ``scanning method'' which produces a key of
shape $\lambda$ associated to $T$.  We provide an English description of the scanning method in Section 2.2, and then a precise mathematical description in Section 2.3.  In Section 2.5, we prove that this key equals the right key of $T$.  Then in Section 2.6, we describe an analogous method for producing the left key of $T$.  Lastly, in Section 2.7, we will discuss some previous methods for right and left key computations.

\section{An English Description of the Scanning Method}

We now describe a method to quickly write down the right key of a given semistandard tableau $T$.  To do so we need only one special definition beyond
the usual terminology.  Given a sequence $(x_1,x_2,x_3,...)$, its \textit{earliest weakly increasing subsequence} (EWIS) is
$(x_{i_1},x_{i_2},x_{i_3},...)$, where $i_1 = 1$ and for $j>1$ the index $i_j$ is the smallest index such that $x_{i_j} \geq x_{i_{j-1}}$.

Here is an English description of our method that a non-mathematician could apply:  Draw an empty Young diagram that has the same shape as $T$; this
diagram will receive the entries of the right key of $T$.  Viewing the bottom entries of the columns of $T$ from left to right as forming a sequence,
use two index fingers to find the EWIS of this sequence.  Whenever an entry is added to the EWIS, put a dot above it.  When this EWIS ends, write its
last member in the diagram for the right key of $T$ in the lowest available box in the leftmost available column.  Next, repeat the process as if the
boxes with dots are no longer part of $T$.  Once every box in $T$ has a dot, the leftmost column of the right key has been formed.  To find the
entries of the next column in the right key: ignore the leftmost column of $T$, erase the dots in the remaining boxes, and repeat the above process.
Continue in this manner until the Young diagram has been filled with the entries of the right key of $T$.

It can be seen that if there is more than one column of a given length, upon reaching this length one may ignore all but the rightmost of these
columns.  Also, erasing the dots is unnecessary if there is one color of ink available for each distinct column length.

\begin{ex}
Let $T$ be the semistandard tableau shown in Figure 2.1.  First we find the EWISs that begin in the first column of $T$.  To display all six EWISs with
just one figure, we use the over-decorations $\dot{\textcolor[rgb]{1.00,1.00,1.00}{1}}$, $\bar{\textcolor[rgb]{1.00,1.00,1.00}{1}}$,
$\tilde{\textcolor[rgb]{1.00,1.00,1.00}{1}}$, $\acute{\textcolor[rgb]{1.00,1.00,1.00}{1}}$, $\hat{\textcolor[rgb]{1.00,1.00,1.00}{1}}$ and
$\check{\textcolor[rgb]{1.00,1.00,1.00}{1}}$ to indicate their successive creations and removals.
\end{ex}

\begin{figure}[h]
\caption{}

\center
\ytableausetup{centertableaux, boxsize = 2em}
\begin{ytableau}
\check{1} & \check{1} & \hat{3} & \hat{4} & \acute{6} \\
\hat{2} & \hat{3} & \acute{5} & \tilde{7} & \dot{9} \\
\acute{4} & \tilde{5} & \tilde{6} & \bar{8} \\
\tilde{5} & \bar{7} & \dot{9} \\
\bar{7} \\
\dot{8}
\end{ytableau}
\hspace{25mm}
\begin{ytableau}
1 & 6 & 6 & 6 & 6 \\
4 & 7 & 7 & 7 & 9 \\
6 & 8 & 8 & 9 \\
7 & 9 & 9 \\
8 \\
9
\end{ytableau}

\end{figure}

So the six EWISs are (8,9,9), (7,7,8), (5,5,6,7), (4,5,6), (2,3,3,4), and (1,1).  Hence the entries of the 1st column of the right key of $T$ are 9,
8, 7, 6, 4, and 1.  To find the EWISs for the later columns, we repeat the process (after erasing the over-decorations) as if the first column is no
longer part of $T$.  The four EWISs for the rightmost length 4 column are (9,9), (6,8), (5,7), and (3,4,6).  Continuing the procedure for the length 3 column, the three EWISs are (8,9), (7), and (4,6).  For the length 2 column (and in fact the rightmost column of any SSYT), the EWISs are just the
entries in the column.  Writing in the last members from each of these EWISs, we obtain the right key of T, which is also displayed in Figure 1.

This chapter's main result, Theorem 2.4.5, states that the output of this method is in fact the right key of $T$.  The proof uses the jeu de taquin
description of the right key presented in [Fu].

\section{Scanning Tableau}

Now we describe the above procedure with mathematical notation.  A \textit{shape} $\zeta$ is a sequence $\zeta = (\zeta_1 , \zeta_2, ... , \zeta_k)$
of positive integers such that $\zeta_1 \geq \zeta_2 \geq ... \geq \zeta_k$ for some $k \geq 0$.  Each member of a shape is a \textit{(column)
length}. The \textit{Young diagram of $\zeta$} is the diagram with $\zeta_i$ top justified boxes in column $i$ for $1 \leq i \leq k$.  A
\textit{semistandard tableau of shape $\zeta$} is a filling of the Young diagram of $\zeta$ with positive integers such that the entries are strictly
increasing down the columns and weakly increasing across the rows.  Henceforth we more simply write \textit{tableau (of shape $\zeta$}).  (For Lie
theoretical purposes, one would bound both the lengths of a shape and the tableau entries by some fixed $n \geq 1$.)

Fix a tableau $T$ of shape $\zeta$ with $k$ columns.  Let $T_i$ denote the $i$th column of $T$ for $1 \leq i \leq k$.  The tableau $T$ is a
\textit{key} if the entries in $T_i$ also appear in $T_{i-1}$ for $1 < i \leq k$.  Given a column $C$, let $l(C)$ denote its length.  So $l(T_i) =
\zeta_i$.  Define $b_i$ to be the bottom entry of $T_i$ for $1 \leq i \leq k$.  Let $e = (e_1,...,e_m)$ for some $m \leq k$ denote the EWIS obtained
from $b = (b_1,...,b_k)$.  Let $d = \{ d_1, ... , d_m \}$ denote the collection of boxes in the Young diagram of $T$ that respectively contain the
entries of $e$.  Define $\setminus$ to be the operator that removes a specified set of boxes (and their entries) from a diagram or tableau.  It is
easy to see that $T \setminus d$ is a tableau.

Write $x \bigodot C$ to notate the result of attaching a single entry $x$ to the bottom of a column $C$, and $C \bigoplus T$ to notate the result of
prepending a column $C$ to the left side of $T$.  The following doubly recursive definition specifies a tableau $S(T)$ that has the same shape as $T$;
here $S_1(T)$ denotes the first column of $S(T)$ and $S_{1B}(T)$ denotes the bottom entry of $S_1(T)$.  Set $S_{1B}(T):= e_m$ (if $T$ is nonempty;
otherwise, for terminating purposes set $S_{1B}(T):= \emptyset$).  Set $S_1(T):= S_{1B}(T) \bigodot S_1(T \setminus d)$.  Define $S(T):= S_1(T)
\bigoplus S(T \setminus T_1)$.  We call $S(T)$ the \textit{scanning tableau} of $T$.  The fact that $S(T)$ is a key will follow from the main result
of this paper.

\section{The Right Key}

In this section we present a common definition of the right key of a tableau, as can be found in Appendix 5 of [Fu].  We also present a specific method
for calculating the right key that uses this definition.  Given a shape $\zeta$ with $k$ lengths and a shape $\eta$ with $l \leq k$ lengths such that
$\eta_i \leq \zeta_i$ for $1 \leq i \leq l$, the \textit{skew diagram of shape $\zeta \setminus \eta$} is obtained by removing the diagram of $\eta$
from the diagram of $\zeta$.  The sequence of column lengths of $\zeta \setminus \eta$ is $(\zeta_1 - \eta_1, ... , \zeta_l - \eta_l, \zeta_{l+1}, ...
, \zeta_k)$.  Some of the lengths in a skew diagram may be zero.  A \textit{skew tableau} is a filling of a skew diagram with positive integers such
that the entries are strictly increasing down the columns and weakly increasing along the rows.

Now we assume familiarity with the jeu de taquin (JDT) process,  as in (for example) Chapter 1 of [Fu].  The \textit{rectification} of a skew tableau
[Fu, p. 15] is the tableau that results from successively applying ``JDT slides'' to the ``inside corners'' of the skew tableau (in any order) until
there are no inside corners left.  It is known that the rectification of a skew tableau is unaffected by any JDT slide or ``reverse JDT (RJDT)
slide''.

\begin{lem}
 Let $U$ be the skew tableau of some shape $\zeta \setminus \eta$ that is formed by arranging the $k$ columns $U_1, ... , U_k$ according to $\eta$.
 Let $1 \leq l \leq k$ and $d > 0$.  Using RJDT slides, one may shift all of the entries of each of $U_1 , ... , U_l$ down $d$ rows without otherwise
 modifying $U$.
\end{lem}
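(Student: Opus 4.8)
The plan is to realize the claimed shift by an explicit, carefully ordered sequence of RJDT slides, and to check by inspection that each slide does exactly what is wanted. First I would strip off $d$ by induction: to lower $U_1,\dots,U_l$ by $d$ rows, lower them by $d-1$ rows (inductive hypothesis), observe that the outcome is again a skew tableau assembled from the columns $U_1,\dots,U_k$ arranged according to a partition (namely $\eta$ with $d-1$ added to each of its first $l$ parts), and then lower them one more row. This reduces everything to $d=1$. For $d=1$ I would induct on $l$: by the $l-1$ case, lower $U_1,\dots,U_{l-1}$ by one row, obtaining a skew tableau $U'$ in which $U_{l-1}$ sits one row below its original position and $U_l,\dots,U_k$ are untouched; then perform a single RJDT slide to lower $U_l$ by one row. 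When $l=1$ there are no columns to the left, so a reverse slide started just below column $1$ can only travel straight up that column, and the base step is immediate.

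The core of the argument is the following claim about that one slide in $U'$: the reverse slide whose hole is introduced in the cell directly below column $l$ propagates straight up column $l$ and is absorbed in the cell directly above the original top cell of column $l$; its net effect is to push every entry of $U_l$ down one row and leave every other entry fixed. I would prove this in four steps. (i) The cell directly below column $l$ is a legitimate starting cell for a reverse slide, because adjoining it keeps the outer diagram a shape; this uses only the monotonicity of $\zeta$. (ii) As long as the hole sits inside column $l$, the entry just above it is weakly larger than the entry just to its left whenever the latter is present: tracing the two competing entries back to $U$, they occupy cells of $U$ lying in the same row, the left-hand one in an earlier column, so weak increase along the rows of $U$ forces the inequality. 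Hence the slide always pulls the entry above the hole downward, and with the standard tie-breaking convention it never veers left. (iii) The hole stops exactly at the original top cell of column $l$: there the cell above is absent, and the cell to its left is absent as well (because $\eta$ is weakly decreasing), so this is an inside corner. (iv) The resulting diagram is a genuine skew shape (again by the monotonicity of $\eta$ and $\zeta$), and semistandardness of the output is then automatic, since an RJDT slide sends a semistandard skew tableau to a semistandard skew tableau [Fu].

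I expect step (ii) to be the real obstacle: one must track precisely which cells of the original $U$ the two competing entries come from once the columns to the left of $l$ have already been lowered, and see that the row comparison in $U$ is exactly the one that applies. This is also the point where the left-to-right processing order is essential --- were column $l$ lowered before column $l-1$, the entry to the left of the hole would be compared with an entry one row higher in column $l$ and the inequality could genuinely fail --- so the induction on $l$ must run in increasing order of the column index. The remaining verifications (the corner conditions of (i) and (iii), the shape inequalities of (iv)) are routine bookkeeping with the weak monotonicity of $\eta$ and $\zeta$.
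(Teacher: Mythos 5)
Your proposal is correct and takes essentially the same route as the paper: the paper also realizes the shift by RJDT slides started directly below $U_1, \dots, U_l$, processed in left-to-right order, asserting that each such slide pulls its column straight down one row without disturbing the columns already lowered. Your only deviations are organizational ($d$ single-row passes over columns $1,\dots,l$ instead of $d$ consecutive slides per column) and that you spell out the row-comparison, corner, and shape checks that the paper leaves as ``it can be seen.''
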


\begin{proof}
As $j$ runs from 1 to $l$, successively perform $d$ RJDT slides beginning directly under $U_j$.  It can be seen that for $2 \leq j \leq l$, each of
these RJDT slides will pull each entry of $U_j$ down one row (without affecting the columns that have already been pulled down).
\end{proof}

A skew tableau is \textit{frank} if its lengths are a rearrangement of the lengths of the shape of its rectification.

\begin{prop}[]
\textbf{[Fu, p. 208]}  Given a tableau $T$ and a skew diagram $\zeta \setminus \eta$ whose lengths are a rearrangement of the lengths of $T$, there is
a unique skew tableau on $\zeta \setminus \eta$ that rectifies to $T$.
\end{prop}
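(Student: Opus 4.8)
The plan is to prove existence and uniqueness separately, in each case relying on two standard features of jeu de taquin recorded in [Fu]: that neither a JDT slide nor an RJDT slide changes the rectification of a skew tableau, and that a forward slide into a chosen inside corner and the RJDT slide that refills the cell it vacates are mutually inverse, each being completely determined once its starting empty cell is fixed.

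For existence, I would start from $T$ itself --- a skew tableau of straight shape, so it rectifies to $T$ --- and build a skew tableau of shape $\zeta\setminus\eta$ by a sequence of RJDT slides. Since each slide leaves the rectification equal to $T$, it suffices to arrange that the empty region grows one cell at a time until it coincides with the diagram of $\eta$, the filled cells then occupying exactly $\zeta\setminus\eta$. The preceding lemma already handles the prototypical case in which $\eta$ is a rectangle lying below an initial block of columns: repeated RJDT slides translate those columns straight down. A general $\eta$ is reached by iterating such moves together with further RJDT slides that let the columns be rearranged, not merely shifted --- this is genuinely needed, since the column lengths of $\zeta\setminus\eta$ need not be weakly decreasing whereas those of $T$ are. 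The hypothesis that the column lengths of $\zeta\setminus\eta$ are a rearrangement of those of $T$ forces $|\zeta\setminus\eta| = |T|$ and is precisely the condition under which the construction can be steered onto the target shape; the deeper reason for this is the same one that makes uniqueness work, and I return to it below. (Columns of length zero in $\zeta\setminus\eta$ simply carry no entries and may be ignored.)

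For uniqueness, suppose $S$ and $S'$ both have shape $\zeta\setminus\eta$ and both rectify to $T$; I would induct on $|\eta|$. When $\eta$ is empty, $S$ and $S'$ are each equal to their common rectification $T$. When $\eta$ is nonempty, the aim is to pick an inside corner $c$ of $\zeta\setminus\eta$ for which the forward JDT slide starting at $c$ carries both $S$ and $S'$ to skew tableaux of the same shape --- using the column-length hypothesis to force the slide path, hence the resulting shape, to be independent of the entries --- so that the inductive hypothesis applies and the two slid tableaux agree; applying the (deterministic) RJDT slide that refills $c$ to both then gives $S = S'$. An equivalent route passes to the plactic monoid: the column reading word of any skew tableau rectifying to $T$ lies in the Knuth class of $T$, and the claim becomes that this class contains exactly one word that factors into strictly decreasing runs of the prescribed lengths --- the ``frank word'' of Lascoux and Sch{\"u}tzenberger. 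On either route, the genuine obstacle is the same: controlling which cells the slides vacate, equivalently showing that a ``shape-deterministic'' inside corner always exists (equivalently, that the frank word exists and is unique). That is the step at which the rearrangement hypothesis on column lengths is used in an essential way --- not merely to match cell counts.
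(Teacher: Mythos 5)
There is a genuine gap, and you have in effect flagged it yourself. Both halves of your sketch funnel into the same unproved claim: that one can always choose an inside corner for which the forward slide path, and hence the shape of the slid tableau, is ``independent of the entries,'' equivalently that frank words with prescribed decreasing-run lengths exist and are unique. That claim is not a bookkeeping step one can extract from the rearrangement hypothesis by induction on $|\eta|$; jeu de taquin slide paths genuinely depend on the entries, and the fact that the rearrangement hypothesis nonetheless pins down a unique skew tableau is precisely the theorem. In Fulton's treatment ([Fu], Appendix A.5, following Lascoux--Sch\"utzenberger) this is established through the plactic/frank-word machinery (factorizations of words in a Knuth class into columns of prescribed lengths), and no shortcut via ``shape-deterministic corners'' is available: for a general inside corner, $S$ and $S'$ can slide to different shapes, so your induction does not get off the ground without the very uniqueness statement you are trying to prove. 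The existence half has the same character: the paper's Lemma 2.4.1 and the length-swap construction show how to reach the particular rearranged shapes needed later, but ``iterating such moves together with further RJDT slides'' to hit an arbitrary shape $\zeta \setminus \eta$ is an assertion, not an argument, and again the rearrangement hypothesis enters essentially rather than through a cell count.

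For comparison, note that the paper does not prove this statement at all: it is imported verbatim from [Fu, p.~208] and used as a black box (its Corollary 2.4.3 is the only consequence needed). So the expected treatment here is a citation; if you want an actual proof, you must reproduce the frank-word/plactic development (or an equivalent argument such as Haiman's dual equivalence), rather than defer to it as ``the genuine obstacle'' inside an otherwise routine induction.
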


\begin{cor}
The rightmost column of a frank skew tableau that rectifies to $T$ depends only upon the length of that column.
\end{cor}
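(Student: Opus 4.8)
Let $U$ and $U'$ be frank skew tableaux that both rectify to $T$ and whose rightmost columns both have length $m$; write $U_1,\dots,U_k$ for the columns of $U$ read from left to right, so $l(U_k)=m$. The plan is to slide $U$ into a rigid normal form in which the rightmost column has been pulled away from the rest, to do the same for $U'$, and then to invoke the uniqueness statement in Proposition~2.4.2. The first move is to \emph{detach the last column}: apply Lemma~2.4.1 in turn with $l=1,2,\dots,k-1$, each time with a large downward shift, so that the cumulative effect pushes column $U_j$ down by the sum of the shifts used at stages $1$ through $j$. With suitable choices, $U_1,\dots,U_{k-1}$ end up occupying rows lying strictly below every row met by $U_k$, pairwise disjoint, and stacked as a descending staircase (leftmost column lowest). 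Each move is an RJDT slide, so the rectification is preserved, and none of them touches $U_k$, so its entries are untouched.

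Next I would \emph{raise the last column}. Once the other columns sit far below, the box directly above $U_k$ is an inside corner of the current skew shape, so a forward JDT slide into it pulls $U_k$ up by one row and does nothing else; iterating brings $U_k$ up to occupy rows $1,\dots,m$, again without altering the rectification or the entries of $U_k$. After these two steps $U$ has become a ``detached descending staircase'' $\widehat U$: it rectifies to $T$, its rightmost column is still $U_k$, and its skew shape is pinned down by the left-to-right list of column lengths $\bigl(l(U_1),\dots,l(U_{k-1}),m\bigr)$. Running the same procedure on $U'$ produces a detached descending staircase $\widehat{U'}$ whose rightmost column is that of $U'$.

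The hard part is that $\widehat U$ and $\widehat{U'}$ need not have the same shape: the sequences $\bigl(l(U_1),\dots,l(U_{k-1})\bigr)$ and $\bigl(l(U'_1),\dots,l(U'_{k-1})\bigr)$ are both orderings of the multiset of column lengths of $T$ with one copy of $m$ removed, but possibly different orderings, and no jeu de taquin slide can permute the left-to-right order of columns. So what is really left --- and this is the whole content of the corollary --- is to show that the rightmost column of the (by Proposition~2.4.2 unique) detached descending staircase that rectifies to $T$ and ends in a column of length $m$ does not change when the remaining lengths are relisted in a different order. I would reduce this to interchanging two \emph{adjacent} interior lengths, and expect that case to follow from a careful combinatorial argument --- the reading word of a detached descending staircase is simply the concatenation of its column words with the last column first, so the transposition corresponds to a controlled rearrangement inside that word --- together with one more appeal to the uniqueness in Proposition~2.4.2. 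Granting this, $\widehat U$ and $\widehat{U'}$ may be taken to sit on the same skew shape, whence Proposition~2.4.2 forces $\widehat U=\widehat{U'}$, so $U$ and $U'$ share a rightmost column. I expect this order-independence step to be the genuine obstacle --- it is essentially the classical well-definedness of the right key, [Fu, p.~209] --- while Steps~1 and~2 are routine bookkeeping with Lemma~2.4.1 and standard jeu de taquin facts.
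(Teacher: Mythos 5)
There is a genuine gap here, and you have in fact located it yourself: the step you ``grant'' is the entire statement. Your Steps 1 and 2 (detaching the columns via Lemma 2.4.1 and then raising the last column with forward slides) are harmless bookkeeping, but they change neither the rightmost column nor the left-to-right order of the remaining lengths. So after normalization you face exactly the original difficulty: two frank skew tableaux rectifying to $T$, each with rightmost column of length $m$, but with the other lengths possibly listed in different orders, hence living on \emph{different} skew shapes. Proposition 2.4.2 identifies skew tableaux only when they sit on the same shape, so it cannot be invoked at that point, and the ``order-independence'' you defer to a ``careful combinatorial argument'' is not a routine detail --- it is precisely the assertion that the rightmost column depends only on its length, i.e.\ the corollary being proved. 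As written, the argument is circular at the decisive step, and you acknowledge as much by saying the missing piece ``is essentially the classical well-definedness of the right key, [Fu, p.~209].''

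For comparison, the paper offers no independent argument either: the corollary is stated as an immediate consequence of the uniqueness result quoted from [Fu, p.~208], i.e.\ it rests on Fulton's well-definedness of the right key. So your attempt, which after much preparation ends by appealing to that same fact, does not add a proof beyond the citation. If you want a self-contained argument in the spirit of this chapter, the content that must actually be supplied is a shape-changing move: show that two adjacent column lengths lying strictly to the left of the rightmost column can be interchanged by slides that preserve rectification and frankness and never touch the rightmost column (this is the role played by the length swaps and Lemma 2.4.4 later in Section 2.4, though note the paper's length swap is only defined for columns lying beyond the inner shape, so some geometric care is needed). Granting such a swap, adjacent transpositions let you convert the shape of $U$ into the shape of $U'$ with the last column fixed, and only then does Proposition 2.4.2 force the two tableaux, and hence their rightmost columns, to coincide. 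That swap argument, or the explicit citation of [Fu, p.~209], is what your outline is missing.
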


Fix a shape $\zeta$ with $k$ lengths.  Let $T$ be a tableau of shape $\zeta$.  For $1 \leq i \leq k$, define $R_i(T)$ to be the rightmost column of
any frank skew tableau whose rightmost column has length $\zeta_i$ and that rectifies to $T$.  The \textit{right key of} $T$ is $R(T) :=
\bigoplus_{i=1}^k R_i(T)$.  Note that $R(T)$ has shape $\zeta$.  Also, it is known [Fu, App. A.5 Cor. 2] that $R(T)$ is a key.  Before specifying a
method to obtain the individual columns of $R(T)$, we define an operation that will be iterated to produce each of them.

Let $U$ be a skew tableau of some shape $\zeta \setminus \eta$ formed from the $k$ columns $U_1 , ... , U_k$, where $\eta$ has $l \leq k$ lengths.
For any $l < j < k$, set $x_j := l(U_j) - l(U_{j+1})$ and define $d_j$ to be the number of boxes in $U_{j-1}$ that are attached to boxes in $U_j$.
For such $j$, the process of successively shifting $U_1, ... , U_{j-1}$ down $d_j$ rows and then successively performing $x_j$ RJDT slides on the
outside corner of $U_j \bigoplus U_{j+1}$ is called the \textit{jth length swap (on $U$)}.  Note that the lengths of the resulting skew tableau are
$(l(U_1), ... , l(U_{j-1}), l(U_{j+1}), l(U_j), l(U_{j+2}), ... , l(U_k))$.

\begin{lem}
Let $U$ be a frank skew tableau of shape $\zeta \setminus \eta$ that rectifies to a tableau $T$.  For $l < j < k$, the $j$th length swap produces a
frank skew tableau $U'$ that also rectifies to $T$.
\end{lem}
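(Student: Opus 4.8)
The plan is to verify the two assertions separately: that $U'$ rectifies to $T$, and that $U'$ is frank. For the rectification claim, I would unpack the definition of the $j$th length swap into its two constituent moves and check that each preserves the rectification. The first move shifts the columns $U_1, \dots, U_{j-1}$ down $d_j$ rows: this is exactly the situation handled by Lemma 2.4.1 (with $l$ there taken to be $j-1$ and $d = d_j$), which realizes this shift as a sequence of RJDT slides. Since the rectification of a skew tableau is unaffected by JDT and RJDT slides (as recalled just before Lemma 2.4.1), this move does not change the rectification, so the intermediate tableau still rectifies to $T$. The second move performs $x_j$ RJDT slides on the outside corner of $U_j \bigoplus U_{j+1}$; again each RJDT slide preserves the rectification, so the final tableau $U'$ rectifies to $T$ as well. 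The only thing to confirm here is that these moves are legal, i.e. that after shifting $U_1,\dots,U_{j-1}$ down there really is an outside corner of $U_j\bigoplus U_{j+1}$ on which to perform the slides, and that the number of available slides is exactly $x_j = l(U_j) - l(U_{j+1})$ — this is a matter of checking the local shape around columns $j$ and $j+1$, using that $d_j$ was chosen precisely to make column $j-1$ clear of column $j$.

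For the frankness claim, I would use the length bookkeeping already recorded in the paragraph defining the length swap: the lengths of $U'$ are $(l(U_1), \dots, l(U_{j-1}), l(U_{j+1}), l(U_j), l(U_{j+2}), \dots, l(U_k))$, which is a permutation of the lengths of $U$. Since $U$ is frank, its lengths are a rearrangement of the lengths of the shape of its rectification $T$; composing the two rearrangements, the lengths of $U'$ are also a rearrangement of the lengths of the shape of $T$. As $U'$ rectifies to $T$ by the previous paragraph, $U'$ is frank by definition.

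The main obstacle is the first paragraph: carefully justifying that the RJDT slides in the second move are well-defined after the downward shift, and that precisely $x_j$ of them convert the shape so that columns $j$ and $j+1$ swap lengths. One must track how shifting $U_1, \dots, U_{j-1}$ down by $d_j$ rows interacts with the row-ends of columns $j-1$ and $j$, to see that the configuration around the boundary of $U_j$ and $U_{j+1}$ is exactly a skew shape admitting $x_j$ successive RJDT slides along its outside corner, each pulling the bottom of $U_{j+1}$ up by one row while leaving the already-shifted columns untouched. Once this local picture is nailed down, the global invariance of rectification under slides does the rest, and the frankness conclusion is essentially formal.
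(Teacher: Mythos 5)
Your proposal is correct and follows essentially the same route as the paper: both moves of the length swap are realized by RJDT slides (the downward shift via Lemma 2.4.1), which preserve rectification, and frankness follows because the lengths of $U'$ are a rearrangement of those of $U$, hence of the shape of $T$. The extra well-definedness checks you flag are already built into the definition of the length swap and Lemma 2.4.1, so the paper's two-sentence argument suffices.
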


\begin{proof}
Rectification is preserved by RJDT slides.  Since $U$ is frank and the lengths of $U'$ are a rearrangement of the lengths of $U$, we have that $U'$ is also frank.
\end{proof}

\begin{prop}
Let $T$ be a tableau of shape $\zeta$ with $k$ columns and let $1 \leq i < k$.  Successively performing the $i$th, $(i+1)$st, ... , $(k-1)$st length
swaps produces a frank skew tableau with lengths $(\zeta_1, ... , \zeta_{i-1} , \zeta_{i+1} , ... , \zeta_k, \zeta_i)$  that rectifies to $T$.  Its
rightmost column is $R_i(T)$.
\end{prop}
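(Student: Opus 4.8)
The plan is to prove the statement by induction on the number of length swaps performed so far, with the preceding lemma on length swaps (which preserves both frankness and the rectification) serving as the engine and the preceding corollary (the rightmost column of a frank skew tableau rectifying to $T$ depends only on its length) used at the end to identify the final column. The base of the induction is the skew tableau $T$ itself: being a tableau, $T$ is a frank skew tableau of shape $\zeta \setminus \emptyset$ that rectifies to $T$ and whose inner shape is empty. The assertion I would carry through the induction, for $i \le j \le k$, is: after successively performing the $i$th, $(i+1)$st, \ldots, $(j-1)$st length swaps one obtains a frank skew tableau that rectifies to $T$, whose length sequence is $(\zeta_1, \ldots, \zeta_{i-1}, \zeta_{i+1}, \ldots, \zeta_j, \zeta_i, \zeta_{j+1}, \ldots, \zeta_k)$ (so the column of length $\zeta_i$ sits in position $j$), and whose inner shape has nonzero parts only in columns $1, \ldots, j-1$. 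The case $j = i$ is exactly the statement about $T$, and it is available because $1 \le i < k$.

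For the inductive step, assume the assertion holds for some $j$ with $i \le j \le k-1$, and let $U$ be the frank skew tableau it produces. To invoke the length-swap lemma for the $j$th length swap I must check its hypothesis $l < j < k$, where $l$ is the number of parts of the inner shape of $U$: here $j < k$ since $j \le k-1$, and $l \le j - 1 < j$ because the inductive assertion says the inner shape of $U$ is supported in columns $1, \ldots, j-1$. The lemma then gives that the $j$th length swap yields a frank skew tableau $U'$ that still rectifies to $T$. By the note accompanying the definition of the length swap, the length sequence of $U'$ is obtained from that of $U$ by transposing the entries in positions $j$ and $j+1$; since those entries are $\zeta_i$ and $\zeta_{j+1}$, the resulting sequence is $(\zeta_1, \ldots, \zeta_{i-1}, \zeta_{i+1}, \ldots, \zeta_{j+1}, \zeta_i, \zeta_{j+2}, \ldots, \zeta_k)$, which is the assertion with $j$ replaced by $j+1$. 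Finally, the $j$th length swap only shifts columns $1, \ldots, j-1$ downward and performs RJDT slides confined to the two-column region made of columns $j$ and $j+1$, so it can only enlarge inner parts in columns $1, \ldots, j-1$ and create a new inner part in column $j$; hence the inner shape of $U'$ is supported in columns $1, \ldots, j$, completing the induction.

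Applying the induction with $j = k$ produces a frank skew tableau $U^{\ast}$ that rectifies to $T$ and has length sequence $(\zeta_1, \ldots, \zeta_{i-1}, \zeta_{i+1}, \ldots, \zeta_k, \zeta_i)$; in particular its rightmost column has length $\zeta_i$. By the preceding corollary, $R_i(T)$ is well defined as the rightmost column of any frank skew tableau that rectifies to $T$ and whose rightmost column has length $\zeta_i$, and $U^{\ast}$ is such a skew tableau, so its rightmost column is $R_i(T)$. I expect the main obstacle to be verifying, at each stage, the hypothesis $l < j$ of the length-swap lemma; this reduces to showing that after the $i$th through $(j-1)$st swaps the inner shape has no parts in columns $j, j+1, \ldots, k$, which in turn requires pinning down precisely that the $m$th length swap only pushes columns $1, \ldots, m-1$ downward and confines its RJDT slides to columns $m$ and $m+1$. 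Once this bookkeeping is established, the remainder follows directly from the length-swap lemma and the corollary.
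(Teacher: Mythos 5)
Your proof is correct and takes essentially the same route as the paper, whose entire proof is to apply Lemma 2.4.4 as $j$ runs from $i$ to $k-1$ and then read off the rightmost column from the definition of $R_i(T)$ (via Corollary 2.4.3). Your extra bookkeeping --- tracking the length sequence and the support of the inner shape so that the hypothesis $l < j < k$ of the length-swap lemma is verified at each step --- merely makes explicit what the paper leaves implicit in its one-line proof and in the note following the definition of the $j$th length swap.
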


\begin{proof}
Apply Lemma 2.4.4 as $j$ runs from $i$ to $k-1$.
\end{proof}

\section{Main Result Concerning the Right Key}

The method of producing $R(T)$ in this section parallels the construction of $S(T)$ in Section 2.3.  Let $\zeta$ be a shape with $k$ lengths.  Fix a
tableau $T$ of shape $\zeta$.

\begin{lem}
$R(T) = R_1(T) \bigoplus R(T \setminus T_1)$.
\end{lem}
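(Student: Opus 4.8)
The plan is to verify that the right key obeys the same ``peel off the first column'' recursion that $S(T)$ obeys by construction. Write $k$ for the number of columns of $T$ and $\zeta=(\zeta_1,\dots,\zeta_k)$ for its shape; the cases $k\le 1$ are immediate, so assume $k\ge 2$. By definition $R(T)=\bigoplus_{i=1}^{k}R_i(T)$ and $R(T\setminus T_1)=\bigoplus_{i=1}^{k-1}R_i(T\setminus T_1)$, and the $i$th column of $T\setminus T_1$ has length $\zeta_{i+1}$. Hence it suffices to prove the column-wise identity
$$R_{i+1}(T)=R_i(T\setminus T_1)\qquad\text{for }1\le i\le k-1.$$
Concatenating these identities over $i$ gives $R_2(T)\bigoplus\cdots\bigoplus R_k(T)=R(T\setminus T_1)$, whence $R(T)=R_1(T)\bigoplus R(T\setminus T_1)$.

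The case $i=k-1$ is clear: $T$ is itself a frank skew tableau that rectifies to $T$ whose rightmost column $T_k$ has length $\zeta_k$, so $R_k(T)=T_k$; likewise $T\setminus T_1$ is a frank skew tableau rectifying to $T\setminus T_1$ with rightmost column $T_k$ of length $\zeta_k$, so $R_{k-1}(T\setminus T_1)=T_k$.

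For $1\le i\le k-2$ I would invoke Proposition 2.4.5 on each side. Applied to $T$ with index $i+1$, it produces $R_{i+1}(T)$ as the rightmost column of the frank skew tableau $U$ obtained from $T$ by successively performing the $(i+1)$st, $\dots$, $(k-1)$st length swaps; applied to $T\setminus T_1$ with index $i$, it produces $R_i(T\setminus T_1)$ as the rightmost column of the frank skew tableau $U'$ obtained from $T\setminus T_1$ by successively performing the $i$th, $\dots$, $(k-2)$nd length swaps. I would then prove, by induction on the number of swaps carried out, that at each stage the columns occupying positions $2,3,\dots,k$ on the $T$-side agree --- entrywise and in their relative vertical positions (that is, up to one common vertical translation) --- with the columns occupying positions $1,2,\dots,k-1$ on the $(T\setminus T_1)$-side. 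The base case holds since those columns are $T_2,\dots,T_k$ on both sides. For the inductive step, the $(i+t)$th length swap on the $T$-side corresponds, under the position shift by one, to the $(i-1+t)$th length swap on the $(T\setminus T_1)$-side: in the $j$th length swap the preliminary downward shifts applied to the columns strictly to the left are by an amount $d_j$ determined solely by the two columns in positions $j-1$ and $j$ (equal on both sides by the induction hypothesis), while its RJDT slides rearrange entries only within the two columns in positions $j$ and $j+1$. Consequently the leftmost column $T_1$ carried along on the $T$-side is merely translated downward and is never altered, so $U$ and $U'$ have the same rightmost column, i.e.\ $R_{i+1}(T)=R_i(T\setminus T_1)$.

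The crux --- and the only genuinely delicate point --- is the assertion that an RJDT slide occurring inside the $j$th length swap touches only columns $j$ and $j+1$. This is precisely what the preliminary downward shift is engineered to ensure: after shifting $U_1,\dots,U_{j-1}$ down by $d_j$ rows they are detached from column $j$, so no slide can leak leftward past column $j$ (cf.\ Lemma 2.4.1 and the definition of the $j$th length swap). Granting that, one matches the down-shift magnitudes $d_j$ on the two sides via the induction hypothesis, and the rest is routine bookkeeping with positions and heights.
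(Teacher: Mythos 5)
Your proposal is correct and takes essentially the same approach as the paper: its one-line proof cites Proposition 2.4.5 to assert that the computation of $R_i(T)$ for $i \geq 2$ never involves the first column (which is only shifted, never altered, by the relevant length swaps), and your column-wise identity $R_{i+1}(T) = R_i(T \setminus T_1)$ with the swap-by-swap bookkeeping is just a detailed expansion of that same observation.
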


\begin{proof}
Proposition 2.4.5 implies that the calculation of $R_i(T)$ does not involve the first $i-1$ columns of $T$.  Therefore the calculations for
$\bigoplus_{i=2}^k R_i(T)$ are independent of $T_1$.
\end{proof}

We now focus on producing $R_1(T)$.  Define $T^{(1)} := T$ and for $1 \leq j \leq k-1$ define $T^{(j+1)}$ to be the skew tableau obtained by
performing the $j$th length swap on $T^{(j)}$.  The shape of $T^{(j)}$ has lengths $(\zeta_2, ... , \zeta_j, \zeta_1, \zeta_{j+1}, ... , \zeta_k)$.
Therefore $T_k^{(k)} = R_1(T)$.  Let $R_{1B}(T)$ denote the bottom entry of $R_1(T)$.  Recall that $b_i$ is the bottom entry of $T_i$ for $1 \leq i
\leq k$, and that $b = (b_1, ... , b_k)$.  For $1 \leq j, h \leq k$, let $b_h^{(j)}$ denote the bottom entry of $T_h^{(j)}$.  Since $R_1(T) =
T_k^{(k)}$, we have that $R_{1B}(T) = b_k^{(k)}$.  Note that if $h > j$, then $T_h^{(j)} = T_h$ and hence $b_h^{(j)} = b_h$.  For $1 \leq j \leq k-1$,
the entry $b_{j+1}^{(j+1)}$ produced by the $j$th length swap on $T^{(j)}$ is one of two possible entries:

\begin{lem}
Let $1 \leq j \leq k-1$. \\
(i)  If $b_{j+1}^{(j)} \geq b_j^{(j)}$, then $b_{j+1}^{(j+1)} = b_{j+1}^{(j)}$. \\
(ii)  Otherwise, $b_{j+1}^{(j+1)} = b_j^{(j)}$.
\end{lem}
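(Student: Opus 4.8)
The plan is to analyze a single length swap carefully and track what happens to the bottom entry of the column that gets moved leftward. Recall that the $j$th length swap on $T^{(j)}$ first shifts $T_1^{(j)},\dots,T_{j-1}^{(j)}$ down $d_j$ rows, and then performs $x_j = l(U_j) - l(U_{j+1})$ RJDT slides on the outside corner of $T_j^{(j)} \bigoplus T_{j+1}^{(j)}$. Since the downward shift of the earlier columns does not touch columns $j$ and $j+1$, the content of $b_{j+1}^{(j+1)}$ is determined entirely by these $x_j$ RJDT slides acting on the two-column skew tableau formed by $T_j^{(j)}$ (of length $\zeta_1$, since the shape of $T^{(j)}$ has $\zeta_1$ in position $j$) sitting above $T_{j+1}^{(j)}$ (of length $\zeta_{j+1}$), with the appropriate skew offset. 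First I would reduce the whole statement to understanding the bottom box of the new longer column produced by sliding an outside corner up through this two-column configuration.

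Next I would examine the very first RJDT slide. The outside corner sits at the bottom of the right column $T_{j+1}^{(j)}$; the slide compares the entry directly above that empty box (namely $b_{j+1}^{(j)} = b_{j+1}$, the bottom of column $j+1$) with the entry directly to its left in column $j$. By semistandardness of $T$ and the way the columns are arranged, the entry to the left in that bottom row is exactly $b_j^{(j)}$. The RJDT rule moves the larger of the two candidates into the empty box, i.e.\ the empty box migrates toward the smaller entry. If $b_{j+1}^{(j)} \geq b_j^{(j)}$, then the slide pulls $b_{j+1}^{(j)}$ straight down (it stays in its column), and the bottom row of the new long column inherits $b_{j+1}^{(j)}$; subsequent slides only rearrange entries in higher rows, leaving the bottom entry fixed, giving case (i). If instead $b_{j+1}^{(j)} < b_j^{(j)}$, the first slide pulls $b_j^{(j)}$ down and to the right into the new bottom row; again, later slides act above this row and do not disturb it, so $b_{j+1}^{(j+1)} = b_j^{(j)}$, giving case (ii).

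The main obstacle is making precise which entries sit in the relevant bottom row just before each slide: I need to verify that after the down-shift of the earlier columns and after the earlier length swaps, the entry immediately to the left of the outside corner of $T_j^{(j)} \bigoplus T_{j+1}^{(j)}$ in the bottom row really is $b_j^{(j)}$ (and not some entry brought in from a shifted column), and that the skew offsets line up so the first RJDT slide is the decisive one. This is a bookkeeping argument about the shape $(\zeta_2,\dots,\zeta_j,\zeta_1,\zeta_{j+1},\dots,\zeta_k)$ of $T^{(j)}$ together with Lemma 2.4.1's down-shift, and it uses that $\zeta_1 \geq \zeta_{j+1}$ so that column $j$ genuinely overhangs column $j+1$ at the bottom. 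Once that alignment is pinned down, the two-case dichotomy for a single RJDT slide on an outside corner — the empty box moves toward the strictly smaller neighbor, ties broken in favor of the vertical move — yields the claim directly, and the fact that no later slide in the same length swap touches the bottom row of the freshly lengthened column finishes it.
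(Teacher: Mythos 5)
There is a genuine gap, and it sits exactly at the point you deferred as ``bookkeeping.'' In the $j$th length swap the hole of the \emph{first} RJDT slide starts in row $\zeta_{j+1}+1$ (just below the bottom of $T_{j+1}^{(j)}$), while $b_j^{(j)}$ sits in row $\zeta_1$ of $T_j^{(j)}$. These rows coincide only when $x_j=\zeta_1-\zeta_{j+1}=1$; in general the entry to the left of the first hole is an entry of column $j$ strictly \emph{above} its bottom, not $b_j^{(j)}$. Worse, your claim that ``subsequent slides only rearrange entries in higher rows, leaving the bottom entry fixed'' is backwards: each slide lengthens column $j+1$ by one box at the bottom, so each subsequent slide begins at a strictly \emph{lower} outside corner, and its very first move re-examines the current bottom entry of column $j+1$. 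Concretely, take $T_j^{(j)}$ with entries $2,4,9$ and $T_{j+1}^{(j)}$ with single entry $3$, so $b_j^{(j)}=9$, $b_{j+1}^{(j)}=3$ and $x_j=2$. The first slide compares $3$ with $4$ (not with $9$) and makes $4$ the new bottom of column $j+1$; only the second slide brings $9$ across. So the decisive comparison happens at the $x_j$th (last) slide, not the first, and the bottom entry of the growing column can change several times along the way.

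The repair is the maximality argument the paper uses. In case (i), $b_{j+1}^{(j)}\geq b_j^{(j)}$ together with column strictness makes $b_{j+1}^{(j)}$ weakly larger than every entry of $T_j^{(j)}\bigoplus T_{j+1}^{(j)}$, so the first move of each of the first $x_j-1$ slides compares it with a column-$j$ entry lying strictly above $b_j^{(j)}$ (hence strictly smaller) and pulls it straight down, and the $x_j$th slide compares it with $b_j^{(j)}$ and still pulls it down (your tie-breaking observation, which is correct, is needed exactly here). In case (ii), $b_j^{(j)}$ is strictly larger than every other entry of the two columns; it is never disturbed before the last slide because the hole never starts in row $\zeta_1$ until then, and at the $x_j$th slide whatever currently sits above the hole is some other entry of the two columns, hence strictly smaller, so $b_j^{(j)}$ moves right and becomes $b_{j+1}^{(j+1)}$. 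Your reduction to the two-column configuration and the tie rule are fine; what is missing is tracking the bottom entry through \emph{all} $x_j$ slides rather than stopping after the first.
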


\begin{proof}
Let $1 \leq j \leq k-1$.  The difference in lengths of the two rightmost columns involved in the $j$th length swap on $T^{(j)}$ is $x_j =  \zeta_1 -
\zeta_{j+1}$.  First suppose $b_{j+1}^{(j)} \geq b_j^{(j)}$.  In this case, by the semistandard conditions $b_{j+1}^{(j)}$ is greater than every entry
in $T_{j}^{(j)} \bigoplus T_{j+1}^{(j)}$ except perhaps $b_j^{(j)}$.  Thus the first $x_j-1$ RJDT slides of the $j$th length swap will each pull
$b_{j+1}^{(j)}$ down one row.  Then the $x_j$th RJDT slide will compare $b_j^{(j)}$ to $b_{j+1}^{(j)}$ and also pull $b_{j+1}^{(j)}$ down. Hence
$b_{j+1}^{(j+1)} = b_{j+1}^{(j)}$.  Otherwise, $b_{j+1}^{(j)} < b_j^{(j)}$.  In this case, $b_j^{(j)}$ is greater than every entry in $T_{j}^{(j)}
\bigoplus T_{j+1}^{(j)}$.  As a result, the $x_j$th RJDT slide will move $b_j^{(j)}$ to the right regardless of what it is compared to.  Thus
$b_{j+1}^{(j+1)} = b_j^{(j)}$.
\end{proof}

Note that {\sl (ii)} happens precisely when the final RJDT slide of the $j$th length swap moves the bottom entry of $T_j^{(j)}$ across to become the
bottom entry of $T_{j+1}^{(j+1)}$.  In this case, the EWIS $e$ of $b$ ``skips'' $T_{j+1}$.  The complementary possibility {\sl (i)} occurs precisely
when a new member is appended to $e$.  Hence:

\begin{cor}
$R_{1B}(T) = S_{1B}(T)$.
\end{cor}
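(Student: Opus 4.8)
The plan is to prove a slightly stronger statement by induction on $j$: for $1 \le j \le k$, the entry $b_j^{(j)}$ equals the last member of the EWIS of the truncated sequence $(b_1, \dots, b_j)$. Granting this, the case $j = k$ gives $b_k^{(k)} = e_m$; since $R_{1B}(T) = b_k^{(k)}$ (as noted in the text) and $S_{1B}(T) := e_m$ by definition, the Corollary follows at once.

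First I would record an elementary observation about the EWIS: for any $1 \le j \le k-1$, the EWIS of $(b_1, \dots, b_{j+1})$ is obtained from the EWIS of $(b_1, \dots, b_j)$ either by appending $b_{j+1}$ as a new final member, which happens exactly when $b_{j+1}$ is at least the last member of the EWIS of $(b_1, \dots, b_j)$, or else by leaving the EWIS unchanged. This is immediate from the greedy left-to-right definition: the index $i_p$ chosen at each stage is the smallest index exceeding $i_{p-1}$ whose entry is $\ge b_{i_{p-1}}$, so the chosen indices for the length-$j$ and length-$(j+1)$ sequences agree until the length-$j$ process halts, after which the only remaining position, $j+1$, either extends the subsequence (if $b_{j+1}$ is large enough) and then terminates it, or does not.

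For the base case $j = 1$, we have $T^{(1)} = T$, so $b_1^{(1)} = b_1$, which is the sole (hence last) member of the EWIS of $(b_1)$. For the inductive step, assume $b_j^{(j)}$ is the last member of the EWIS of $(b_1, \dots, b_j)$. Since $j+1 > j$, the text's remark that untouched columns keep their entries gives $b_{j+1}^{(j)} = b_{j+1}$. Now apply the preceding Lemma: in case (i), $b_{j+1} = b_{j+1}^{(j)} \ge b_j^{(j)}$, so by the observation above the EWIS of $(b_1, \dots, b_{j+1})$ is obtained by appending $b_{j+1}$, and its last member is $b_{j+1} = b_{j+1}^{(j+1)}$; in case (ii), $b_{j+1} = b_{j+1}^{(j)} < b_j^{(j)}$, so the EWIS of $(b_1, \dots, b_{j+1})$ is unchanged and its last member is $b_j^{(j)} = b_{j+1}^{(j+1)}$. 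Either way $b_{j+1}^{(j+1)}$ is the last member of the EWIS of $(b_1, \dots, b_{j+1})$, closing the induction.

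I do not expect a genuine obstacle here. The preceding Lemma has already carried out all the jeu-de-taquin work, and the remark following it essentially matches cases (i) and (ii) of that Lemma with ``a new member is appended to $e$'' versus ``$e$ skips $T_{j+1}$''; the present proof simply makes that correspondence precise. The only points needing a little care are the bookkeeping that the first $j$ length swaps leave columns $j+2, \dots, k$ untouched (so that $b_{j+1}^{(j)} = b_{j+1}$), which is already in the text, and the elementary EWIS-extension observation of the second paragraph. Everything else is routine.
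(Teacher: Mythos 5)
Your proof is correct and follows essentially the same route as the paper: the paper's one-line proof is precisely the iteration of Lemma 2.5.2 over $j = 1, \dots, k-1$, using the observation that case (i) corresponds to appending a member to $e$ and case (ii) to skipping $T_{j+1}$, which is exactly the induction you spell out (your strengthened statement about truncated sequences and the EWIS-extension observation just make the paper's implicit bookkeeping explicit).
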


\begin{proof}
Apply these two observations as $j$ run from 1 to $k-1$ in Lemma 2.5.2 to see that $b_k^{(k)}$ is the last member of $e$.
\end{proof}

Recall that $d$ denotes the collection of boxes containing $e$:

\begin{lem}
$R_1(T) = R_{1B}(T) \bigodot R_1(T \setminus d)$.
\end{lem}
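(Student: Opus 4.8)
The plan is to run the length-swap procedure of Proposition 2.4.5 simultaneously on $T$ and on $T \setminus d$ and show that the two runs proceed ``in parallel,'' the run on $T$ differing only in that it carries along one extra box in the current long column and one extra box in certain other columns. Write $1 = c_1 < c_2 < \dots < c_m$ for the column indices of the members of $e$ (so $e_i = b_{c_i}$, and $d_i$ is the bottom box of column $c_i$ of $T$). Let $T^{(j)}$ be the skew tableau produced after the first $j-1$ length swaps on $T$, as in Proposition 2.4.5, and let $(T\setminus d)^{(j)}$ be the analogous object built from $T\setminus d$ (valid since $T\setminus d$ is a tableau). I would prove by induction on $j$ that there is a set $D^{(j)}$ of boxes of $T^{(j)}$, each occupying the bottom of its column, with $D^{(1)}=d$, such that deleting $D^{(j)}$ from $T^{(j)}$ and relabeling column lengths yields exactly $(T\setminus d)^{(j)}$; moreover the box of $D^{(j)}$ lying in the length-$\zeta_1$ column of $T^{(j)}$ contains the last member of $e$ appended so far (the quantity $b_j^{(j)}$ tracked in Lemma 2.5.2).

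The base case $j=1$ is the definition $D^{(1)}=d$, together with the observation that $c_1=1$ is an EWIS column, so the length-$\zeta_1$ column of $T$ does lose its bottom box $d_1$, which contains $e_1$. For the inductive step I would analyze a single length swap using the dichotomy of Lemma 2.5.2. In case (i), $b_{j+1}$ becomes a new member of $e$, so column $j+1$ is an EWIS column, its bottom box $d_{i+1}$ gets pulled down to the bottom of the (now shifted) long column, and the old bottom $e_i$ of the long column is pushed one position left but stays on the bottom row; here the long and the length-$\zeta_{j+1}$ columns both lose a box, so the swap on $(T\setminus d)^{(j)}$ uses the same number $x_j$ of RJDT slides. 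In case (ii), $e$ skips column $j+1$, that column's bottom box is \emph{not} in $D$, the long column just slides right keeping its bottom entry, and the swap on $(T\setminus d)^{(j)}$ uses $x_j-1$ slides (the removed box is exactly the one that would have made the final comparison). In both cases one must verify that performing the length swap and then deleting the boxes of $D^{(j+1)}$ gives the same skew tableau as deleting the boxes of $D^{(j)}$ first and then performing the shorter length swap; this is where the ``earliest'' in EWIS is used, since it is precisely the earliest-weakly-increasing choice that makes the tracked bottom entry $\le$ every entry against which it is compared during the slides (compare the entrywise inequalities in the proof of Lemma 2.5.2), so that its box rides at the very bottom without deflecting any slide path and, symmetrically, its removal changes no slide path.

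Granting the induction, taking $j=k$ finishes the argument. By Proposition 2.4.5 the rightmost (length-$\zeta_1$) column of $T^{(k)}$ is $R_1(T)$, and the rightmost column of $(T\setminus d)^{(k)}$ is $R_1(T\setminus d)$, of length $\zeta_1-1$; the inductive statement says the latter is obtained from $R_1(T)$ by deleting the bottom box of $D^{(k)}$ in that column, which contains $e_m = S_{1B}(T) = R_{1B}(T)$ by Corollary 2.5.3. Hence $R_1(T)\setminus\{\text{bottom box}\} = R_1(T\setminus d)$, i.e.\ $R_1(T) = R_{1B}(T)\bigodot R_1(T\setminus d)$. An alternative routing, with the same crux, is to trace $d$ through all $k-1$ swaps at once, producing a box set $\tilde d \subseteq T^{(k)}$ that contains the bottom box of $R_1(T)$, check that $T^{(k)}\setminus \tilde d$ is frank with the column lengths of $T\setminus d$ and rectifies to $T\setminus d$, and then invoke the uniqueness in Proposition 2.4.2 to identify it as the frank skew tableau that computes $R_1(T\setminus d)$.

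I expect the commutation claim in the inductive step — that a length swap and then deletion of the distinguished bottom boxes agrees with deletion first and then the (possibly shorter) length swap — to be the main obstacle, because it requires a careful look at how each RJDT slide interacts with the bottom row and genuinely exploits the extremal ``earliest'' property of the EWIS rather than merely the fact that $e$ is weakly increasing; the remaining work is bookkeeping about column positions and lengths.
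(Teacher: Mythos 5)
Your plan is essentially the paper's own proof: it too tracks the length swaps of Proposition 2.4.5 and uses the dichotomy of Lemma 2.5.2 to show that each member of $e$ rides along at the bottom and can be deleted without altering the rest of the computation of $R_1(T)$ --- exactly your commutation claim, with your $D^{(j)}$ induction serving as explicit bookkeeping for the paper's phrase that each such entry ``can be safely removed from $T$ (and its successor skew tableaux).'' The paper settles what you flag as the main obstacle with two short observations: in case (i) the new bottom entry is pulled straight down, so the first move of every RJDT slide merely lowers it one row and the remainder of the slide proceeds as if it were absent, and the presence of a column's bottom entry never affects whether the entries above it get pulled to the right.
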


\begin{proof}
Let $1 \leq j \leq k-1$.  We need to show that the entries of $T_{j+1}^{(j+1)}$ above $b_{j+1}^{(j+1)}$ determined by the $j$th length swap on
$T^{(j)}$ are unaffected by the presence of the entries of $e$ in $T_j^{(j)}$.  (These entries are initially located at $d$ in $T^{(1)} = T$.) It is
clear that:  $(*)$ For $1 \leq j \leq k-1$, the presence of $b_j^{(j)}$ will not affect whether or not the entries in $T_j^{(j)}$ above it will be
pulled to the right during the $j$th length swap.  Since $e_1 = b_1^{(1)}$, it can be safely removed from $T$ (and its successor skew tableaux).

From Lemma 5.2, all but the first member of $e$ arise as $b_{j+1}^{(j)}$ for some $1 \leq j \leq k-1$.  Suppose $1 \leq j \leq k - 1$ and
$b_{j+1}^{(j)} = b_{j+1}^{(j+1)}$.  By the proof of Lemma 2.5.2 we have that $b_{j+1}^{(j)}$ was pulled straight down from the bottom of $T_{j+1}^{(j)}$
to the bottom of $T_{j+1}^{(j+1)}$.  So the first move in every RJDT slide executed in the $j$th length swap pulls $b_{j+1}^{(j)}$ down 1 row.  Then
the remainder of the RJDT slide continues as it would have if $b_{j+1}^{(j)}$ was not originally part of $T^{(j)}$.  Now $b_{j+1}^{(j)} =
b_{j+1}^{(j+1)}$, so by $(*)$ it can be safely removed from $T$ (and its successor skew tableaux).  Similar reasoning shows that $b_k^{(k)}$ can be
safely removed when $b_k^{(k-1)} = b_k^{(k)}$.  So we see that the presence of the entries in $e$ does not affect the determination of $R_1(T)
\setminus R_{1B}(T)$.  Thus once $R_{1B}(T)$ is found, we can remove all of $d$ (and $e$) from $T$ and then find the rest of $R_1(T)$ iteratively
without the entries of $e$ being present.
\end{proof}

\begin{thm}
Let $T$ be a tableau.  The right key $R(T)$ is equal to the scanning tableau $S(T)$.
\end{thm}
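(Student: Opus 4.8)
The plan is to prove the theorem by complete induction on the number of boxes of $T$, mirroring the fact that the construction of $S(T)$ in Section 2.3 is doubly recursive in exactly the way that the results of this section have just exhibited for $R(T)$. For the base case, if $T$ is empty then both $S(T)$ and $R(T)$ are empty, and there is nothing to check.

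For the inductive step, I would assume the theorem holds for every tableau with strictly fewer boxes than $T$, and suppose $T$ is nonempty. Then the EWIS $e$ of the bottom-entry sequence $b$ is nonempty, so the box set $d$ is nonempty, and hence $T \setminus d$ has strictly fewer boxes than $T$; likewise $T \setminus T_1$ has strictly fewer boxes than $T$. The induction hypothesis therefore yields $R(T \setminus T_1) = S(T \setminus T_1)$ and $R(T \setminus d) = S(T \setminus d)$, and reading off the leftmost columns of the second equation gives $R_1(T \setminus d) = S_1(T \setminus d)$.

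Now I would build the leftmost column first and then the rest. By Lemma 2.5.4, $R_1(T) = R_{1B}(T) \bigodot R_1(T \setminus d)$; by Corollary 2.5.3, $R_{1B}(T) = S_{1B}(T)$; and by the previous paragraph $R_1(T \setminus d) = S_1(T \setminus d)$. Feeding these into the defining recursion $S_1(T) = S_{1B}(T) \bigodot S_1(T \setminus d)$ shows $R_1(T) = S_1(T)$. Then by Lemma 2.5.1, $R(T) = R_1(T) \bigoplus R(T \setminus T_1) = S_1(T) \bigoplus S(T \setminus T_1) = S(T)$, the final equality being the defining recursion for $S(T)$. This completes the induction.

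Essentially all of the real content has already been isolated in Lemmas 2.5.2 and 2.5.4 together with Corollary 2.5.3, so the theorem itself amounts to bookkeeping; the one point deserving care is the well-foundedness of the nested recursion, namely that removing $d$ and, separately, removing $T_1$ each strictly decreases the number of boxes, which is what guarantees both that the recursive definition of $S(T)$ terminates and that the induction hypothesis applies to $T \setminus d$ and to $T \setminus T_1$. As an immediate consequence, since $R(T)$ is known to be a key [Fu, App. A.5 Cor. 2], so is $S(T)$, confirming the claim made in Section 2.3.
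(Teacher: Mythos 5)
Your argument is correct and follows essentially the same route as the paper: the paper's proof simply states that Corollary 2.5.3, Lemma 2.5.4, and Lemma 2.5.1 combine to match the doubly recursive definition of $S(T)$, and your induction on the number of boxes just makes that bookkeeping explicit. The added care about well-foundedness of the recursion is a reasonable elaboration, not a departure.
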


\begin{proof}
The combination of Corollary 2.5.3, Lemma 2.5.4, and Lemma 2.5.1 agrees with the doubly recursive definition of $S(T)$.
\end{proof}

\section{The Left Key}

Here is a method to write down the left key of a given semistandard tableau $T$:  Draw an empty Young diagram that has the same shape as $T$.  Let $k$
be the number of columns of $T$.  Let $a = (a_1, ... , a_k)$ be the sequence defined as follows:  Let $a_1$ be the bottom entry of the $k$th column.
For $1 < i \leq k$, let $a_i$ be the largest entry in the $(k+1-i)$th column that is less than or equal to $a_{i-1}$.  Note that by the semistandard
row condition, at least one such entry is guaranteed to exist.  Put a dot above each entry of $a$.  Place $a_k$ in the lowest available box in the
rightmost available column of the Young diagram.  Next, repeat the process as if the boxes with dots and all boxes below them are no longer part of
$T$.  Once such a sequence has been found for each entry in the rightmost column of $T$, the rightmost column of the left key of $T$ has been
determined.  To find the next column in the left key, ignore the rightmost column of $T$, erase all the dots, and repeat the above process.  Continue
in this manner until the entire Young diagram has been filled.

The proof that the tableau produced by this method is the left key of $T$ is similar to the proof of the analogous statement concerning the right key
given in this paper.  In fact the proof is simpler, since the sequence produced above necessarily contains exactly one entry from each column to the
left of its starting point.

\section{Previous Methods}

Lascoux and Sch\"{u}tzenberger introduced [LS] the notions of right and left key in Definition 2.9 and developed their relation to Demazure characters on pp. 132 - 138.  The paper [RS] by Reiner and Shimozono explicitly stated the summation formula for the ``key polynomial'' in Theorem 1.  Fulton's
book [Fu] appeared in 1997.  In [Le] Lenart stated (on p. 280) that one can use ``reverse column insertion'' to produce the left key of a tableau,
restates [RS, Thm. 1] as Theorem 4.1, and discusses how this result may be proved using material from [LS].  In [Av], Aval showed how to compute the left key of a tableau using manipulations of ``sign matrices''.  Then he obtained the right key from the left key by using the notion of ``complement'' of a Young tableau.  Mason presented a method in [Mas] to produce the right key of a tableau using ``semiskyline augmented fillings''.  Both [Av] and [Mas] had larger goals than key computations that led to these procedures.

\chapter{Sets of Demazure Tableaux and Their Convexity}

\section{Introduction}

In this chapter, we will begin in Section 3.2 by introducing some notation to keep track of the scanning paths used in constructing $S(T)$ for a given semistandard tableau $T$.  Then in Section 3.3, we will fix a partition $\lambda$ and a permutation $w$ and study the structure of a Demazure tableaux $T$.  Given a location $(l,k)$ in $T$, we will define a ``local condition set'' for $(l,k)$ based on the key of $w$ and the entries ``southwest'' of $(l,k)$ in $T$.  The first main result in this chapter, Theorem 3.4.1, shows that a tableau $T$ is a Demazure tableau for $w$ if and only if every entry of $T$ belongs to the local condition set for its location.  Thus we obtain an entry-wise condition for a semistandard tableau to contribute to the Demazure character for $\lambda$ and $w$.

Starting in Section 3.6, we define and study a pattern, the ``312-pattern'', that may arise in the one rowed permutation of a given $w$.  Reiner and Shimozono showed in Theorem 25 of [RS] that a certain tableau condition on $w$ is necessary and sufficient for a Demazure character to equal a ``flagged Schur polynomial''.  These flagged Schur polynomials are generalizations of the Schur functions described in Section 1.2 for which the tableaux entries satisfy certain bounds based on their rows.  Postnikov and Stanley improved upon one direction of Theorem 25 in Theorem 14.1 of [PS]:  They showed that if $w$ is ``312-avoiding'', then the Demazure character is also a flagged Schur polynomial.

For our strengthenings of these results, due to time limitations we restrict to the $n$-partition ``staircase shapes'' $\lambda = (d, ... , d, d-1, ... , 2, 1)$ for some $1 \leq d \leq n$.  (The results of [PS] and [RS] consider general $\lambda$.)  In Proposition 3.7.1, we prove that if $w$ is 312-avoiding, then the Demazure tableaux for $w$ are precisely the semistandard tableaux that are less than or equal to the key of $w$.  This result pertains to the actual set of tableaux that contribute to the Demazure character, as introduced by Lascoux and Sch{\"u}tzenberger in [LS], whereas the results of [PS] and [RS] are stated only at the character level.  We are able to do this using our Theorem 3.4.1; no direct description of general Demazure tableaux was available to earlier authors.

In Chapter 15 of [PS], Postnikov and Stanley noted that the set of tableaux associated to a flagged Schur polynomial is a ``convex polytope'' in $\mathbb{R}^{l(w)}$.  It is a quick consequence of Proposition 3.7.1 that the set of  Demazure tableaux for a 312-avoiding permutation is a convex polytope in $\mathbb{R}^{|\lambda|}$, as noted in Corollary 3.7.2.  In a forthcoming paper we will indicate how one can see that the set of Demazure tableaux for a 312-avoiding permutation is the set of tableaux for a certain flagged Schur polynomial.  In Section 3.8, we prove that if $\lambda = (n, n-1, ... , 2, 1)$ and $w$ is not 312-avoiding, then the set of Demazure tableaux is not a convex polytope.  Since this pattern avoiding condition is both necessary and sufficient, the final result in this chapter, Theorem 3.9.1, combines these two results to completely characterize for which $w \in W$ the set of Demazure tableaux for $\lambda = (n, n-1, ... , 2, 1)$ is a convex polytope.

\section{Scanning Path Notation and the Local Condition Set}

Fix an $n$-partition $\lambda \in \Lambda^+$ and a permutation $w \in W$.  Fix a semistandard tableau $T \in \mathcal{T}_\lambda$.  Here we recall the scanning method of Section 2.3 for constructing $S(T)$ and introduce some additional notation.  Let $1 \leq j \leq \lambda_1$.  Working from the bottom of the $j$th column of $T$ upwards, we construct one ``northeasterly'' \textit{scanning path} in the shape $\lambda$ originating from each box in that column of $T$.  Set $\lambda^{(1)} := \lambda$ (hence $\zeta^{(1)} = \zeta$) and $T^{(1)} := T$.  Initialize $P(T; j, \zeta_j) := ( (j, \zeta^{(1)}_j) )$.  Scan the column bottoms $T^{(1)}(h, \zeta^{(1)}_h)$ for $h > j$ for the earliest $h$ such that $T^{(1)}(j, \zeta^{(1)}_j) \leq T^{(1)}(h, \zeta^{(1)}_h)$.  If such an $h$ exists, append $(h, \zeta^{(1)}_h)$ to $P(T; j, \zeta_j)$.  Repeat this process for the bottom entries further to the right of $(h, \zeta^{(1)}_h)$, comparing them to the entry in the most recently appended location until there does not exist a further weakly larger entry.  This completes the construction of the scanning path $P(T; j, \zeta_j)$.

The entries of $T$ used in this process form the earliest weakly increasing subsequence (EWIS) for the sequence of column bottoms weakly to the right of $(j, \zeta_j)$.  Define $S(T; j, \zeta_j)$ to be the last value in this EWIS.  Now remove the boxes in $P(T; j, \zeta_j)$ from $\lambda^{(1)}$ to create $\lambda^{(2)}$ (and hence $\zeta^{(2)}$), and the boxes and their values from $T^{(1)}$ to obtain $T^{(2)}$.  As $i$ runs from 2 to $\zeta_j$, repeatedly apply this process to $T^{(i)}$ to produce the other $\zeta_j - 1$ scanning paths that begin in the $j$th column.  Here each constructed path $P(T^{(i)}; j, \zeta^{(i)}_j)$ is henceforth denoted $P(T; j, \zeta_j - (i-1))$.  Apply this process to all columns of $T$ to obtain $S(T; j,i)$ for every $(j,i) \in \lambda$.  These are the entries of the scanning tableau $S(T)$ of $T$.

Fix $(l, k) \in \lambda$.  For each $j \leq l$, it can be seen that there is exactly one $i \in [1, \zeta_j]$ such that $(l,k) \in P(T; j,i)$.  Now fix some $1 \leq j \leq l-1$.  Let $a(l,k; j) =: a(j)$ be the row index such that $(l-1, k) \in P(T; j, a(j))$.  If $k < \zeta_l$, let $b(l,k; j) =: b(j)$ be the row index such that $(l, k+1) \in P(T; j, b(j))$.  When $k = \zeta_l$, set $b(l,k; j) = \zeta_j + 1$.  It can be seen that the only paths beginning in column $j$ that may reach $(l,k)$ are the paths originating from rows $a(j)$ through row $b(j) - 1$ inclusive.  For $a(j) \leq i \leq b(j)-1$, let $h$ be the largest value less than $l$ such that $(h, m) \in P(T; j, i)$ for some m.  Then define $E(l, k; j, i) := T(h,m)$, where $h$ and $m$ depend upon $l,k,j,i$ as above.  By convention, set $a(l) := b(l)-1 := k$ and $E(l,k;l,k) :=k$.  For any $(j,i) \in \lambda$, the entry at $(j,i)$ in $Y_\lambda(w)$ is denoted $Y(w;j,i) := Y_\lambda(w;j,i)$.  Define the \textit{local condition set} $B(T, w; l, k) := \bigcap_{j=1}^{l} \hspace{2mm}( \bigcup_{i=a(j)}^{b(j)-1} \hspace{2mm} [ E(l, k; j,i), Y(w; j,i)  ] \hspace{2mm} )$.

\section{Using Local Condition Sets for Right Key Comparisons}

Let $\lambda \in \Lambda^+$ and $w \in W$.  Here we show that the scanning tableau of a given semistandard tableau $T$ is dominated by the key of $w$ if and only if the entries in $T$ come from the local condition sets for their locations.

\begin{prop}
Let $T \in \mathcal{T}_\lambda$.  Then $S(T; j, i) \leq Y(w; j, i)$ for all $(j,i) \in \lambda$ if and only if $T(l, k) \in B(T, w; l, k)$ for all $(l,k) \in \lambda$.
\end{prop}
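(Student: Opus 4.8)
The plan is to prove the two implications directly, working entirely with the scanning paths of $T$. Note that the right key $R(T)$ and the theorem identifying it with $S(T)$ play no role here: the statement mentions only $S(T)$, $Y_\lambda(w)$, and the data $a(j) = a(l,k;j)$, $b(j) = b(l,k;j)$, $E(l,k;j,i)$ built from the scanning construction, so the whole argument takes place inside Section 3.2's formalism.

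First I would record two facts about scanning paths. \emph{Fact (1):} the entries of $T$ read along a scanning path $P(T;j,i)$ are exactly the EWIS used to build it, hence weakly increase; consequently $T(l,k)\le S(T;j,i)$ whenever $(l,k)\in P(T;j,i)$, and if $(h,m)$ is the last box of that path strictly before column $l$ then $T(h,m)\le T(l,k)$, i.e. $E(l,k;j,i)\le T(l,k)$ when $P(T;j,i)$ reaches $(l,k)$. \emph{Fact (2):} for a fixed target $(l,k)$ with $j<l$, the values $E(l,k;j,i')$ are monotone in $i'$ over $[a(j),b(j)-1]$, and the unique index $i^{\ast}$ with $(l,k)\in P(T;j,i^{\ast})$ is the \emph{largest} $i'\in[a(j),b(j)-1]$ for which $E(l,k;j,i')\le T(l,k)$; such an index exists since the path through $(l-1,k)$ has index $a(j)$ and reaches $(l-1,k)$, so $E(l,k;j,a(j))\le T(l-1,k)\le T(l,k)$. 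I expect Fact (2) to be the real obstacle: proving the ``largest-index'' characterization requires unwinding the bottom-to-top, ``earliest weakly larger'' rule governing which path first claims $(l,k)$, and tracking how the predecessor boxes of the successive column-$j$ paths shift as their starting rows vary while boxes are being removed.

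For the forward implication, assume $S(T;j,i)\le Y(w;j,i)$ for all $(j,i)\in\lambda$ and fix $(l,k)$. For $j=l$ the corresponding factor of $B(T,w;l,k)$ is $[\,k,\,Y(w;l,k)\,]$, and $k\le T(l,k)\le S(T;l,k)\le Y(w;l,k)$ by semistandardness of $T$ and the hypothesis. For $1\le j\le l-1$, let $i^{\ast}=i(l,k;j)$ be the unique index with $(l,k)\in P(T;j,i^{\ast})$; then $i^{\ast}\in[a(j),b(j)-1]$, and Fact (1) together with the hypothesis gives $E(l,k;j,i^{\ast})\le T(l,k)\le S(T;j,i^{\ast})\le Y(w;j,i^{\ast})$, so $T(l,k)\in[E(l,k;j,i^{\ast}),Y(w;j,i^{\ast})]\subseteq\bigcup_{i'=a(j)}^{b(j)-1}[E(l,k;j,i'),Y(w;j,i')]$. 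Intersecting over $j=1,\dots,l$ yields $T(l,k)\in B(T,w;l,k)$.

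For the converse, assume $T(l,k)\in B(T,w;l,k)$ for all $(l,k)\in\lambda$ and fix $(j,i)\in\lambda$. Let $(l^{\ast},k^{\ast})$ be the terminal box of $P(T;j,i)$, so that $S(T;j,i)=T(l^{\ast},k^{\ast})$ and $i$ is the unique path index with $(l^{\ast},k^{\ast})\in P(T;j,i)$. If $l^{\ast}=j$ then $k^{\ast}=i$, and the $j$-factor $[\,i,\,Y(w;j,i)\,]$ of $B(T,w;j,i)$ already gives $S(T;j,i)=T(j,i)\le Y(w;j,i)$. If $l^{\ast}>j$, apply the hypothesis at $(l^{\ast},k^{\ast})$ and extract its $j$-factor: there is $i'\in[a(l^{\ast},k^{\ast};j),b(l^{\ast},k^{\ast};j)-1]$ with $E(l^{\ast},k^{\ast};j,i')\le T(l^{\ast},k^{\ast})\le Y(w;j,i')$. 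By Fact (2), applied to the target $(l^{\ast},k^{\ast})$ and column $j$, the path index $i$ is the largest index in that range with $E(l^{\ast},k^{\ast};j,\cdot)\le T(l^{\ast},k^{\ast})$, so $i'\le i$; since $Y_\lambda(w)$ is semistandard its column entries strictly increase, hence $Y(w;j,i')\le Y(w;j,i)$, and therefore $S(T;j,i)=T(l^{\ast},k^{\ast})\le Y(w;j,i)$. As $(j,i)$ was arbitrary, this proves the converse and completes the argument.
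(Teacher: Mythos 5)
Your argument is correct and is essentially the paper's own proof: the forward direction is identical, and your converse (using the hypothesis at the terminal box of $P(T;j,i)$, discarding the indices above $i$, and then bounding by $Y(w;j,i)$ via the strict increase of the key's column entries) is exactly the paper's argument. The one step you left open, Fact (2), is the paper's one-line parenthetical and is easier than you fear: only the ``largest index'' half is needed (the monotonicity of $E(l,k;j,i')$ is never used), and it holds because each path $P(T;j,h)$ with $i < h \le b(j)-1$ is constructed while $(l,k)$ is still the lowest unclaimed box of column $l$, so $E(l,k;j,h) \le T(l,k)$ would force that path to claim $(l,k)$, contradicting the uniqueness of the path through $(l,k)$.
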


\begin{proof}

First fix $(l, k) \in \lambda$.  Let $1 \leq j \leq l-1$.  Let $1 \leq i \leq \zeta_j$ be the unique index such that $(l,k) \in P(T; j,i)$.  The last value before $T(l,k)$ in the EWIS defining $P(T; j,i)$ was denoted $E(l,k;j,i)$.  The last value in this EWIS is $S(T; j, i)$.  So we have $E(l,k; j,i) \leq T(l,k) \leq S(T;j,i)$.  By hypothesis we have $S(T;j,i) \leq Y(w;j,i)$.  Hence $T(l,k) \in [ E(l,k; j,i), Y(w; j,i) ]$.  When $j=l$, we have $\bigcup_{i=a(j)}^{b(j)-1}  [ E(l, k; j,i), Y(w; j,i)  ]  ) = [k, Y(w; l,k)]$.  Since $T$ is semistandard, we know $T(l,k) \geq k$.  From the definition of $S(T;l,k)$, we have $T(l,k) \leq S(T; l,k)$.  Hence $T(l,k) \in [k, Y(w; l,k)]$.  Intersecting over all $1 \leq j \leq l$, we see that $T(l,k) \in B(T,w;l,k)$.

Now fix $(j, i) \in \lambda$.  Let $(l,k)$ denote the last position in $P(T; j,i)$; here $S(T; j,i) = T(l,k)$.  By hypothesis, we have $T(l,k) \in \bigcup_{h = a(j)}^{b(j)-1} [ E(l,k; j,h), Y(w; j,h) ]$.  However, the entry $T(l,k) < E(l,k; j,h)$ for all $h > i$.  (Otherwise $(l,k)$ would be in $P(T; j,h)$ for some $h > i$.)  So $T(l,k) \in \bigcup_{h = a(j)}^i [ E(l,k; j,h) , Y(w; j,h) ]$.  Since $Y_\lambda(w)$ is semistandard, we have $Y(w; j, r) > Y(w, j, s)$ when $r > s$.  Thus $Y(w; j, i)$ is an upperbound for $\bigcup_{h = a(j)}^i [ E(l,k; j,h), Y(w; j,h) ]$.  This implies $S(T; j,i) = T(l,k) \leq Y(w; j, i)$.

\end{proof}

\section{A New Description of Demazure Tableaux and Characters}

Fix $\lambda \in \Lambda^+$ and  $w \in W$.  Recall that $\mathcal{D}_\lambda(w) = \{ T \in \mathcal{T}_\lambda \hspace{1mm} | \hspace{1mm} R(T) \leq Y_\lambda(w) \}$ denotes the set of Demazure tableaux for $w$.  Theorem 2.5.5 allows us to avoid refering to the right key $R(T)$ in the definition of $\mathcal{D}_\lambda(w)$, since it is equal to the scanning tableau $S(T)$.  Hence Proposition 3.3.1 may be used here to reformulate the notion of a Demazure tableau.  The following result is joint work with R. A. Proctor:

\begin{thm}
Let $\lambda \in \Lambda^+$ and $w \in W$.  A semistandard tableau $T$ of shape $\lambda$ is a Demazure tableau for $w$ if and only if $T(l, k) \in B(T, w; l, k)$ for all $(l, k) \in \lambda$.
\end{thm}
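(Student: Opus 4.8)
The plan is to obtain this theorem by simply composing the two principal results already established, Theorem 2.5.5 and Proposition 3.3.1, with the definition of $\mathcal{D}_\lambda(w)$. By definition, a semistandard tableau $T$ of shape $\lambda$ is a Demazure tableau for $w$ exactly when $R(T) \leq Y_\lambda(w)$ in the entry-wise order on $\mathcal{T}_\lambda$; unwinding the definition of entry-wise comparison, this says $R(T)(j,i) \leq Y(w; j,i)$ for every box $(j,i) \in \lambda$.

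First I would apply Theorem 2.5.5, which identifies the right key $R(T)$ with the scanning tableau $S(T)$. In the notation of Section 3.2 this means $R(T)(j,i) = S(T; j,i)$ for all $(j,i) \in \lambda$, so the statement ``$T$ is a Demazure tableau for $w$'' is equivalent to ``$S(T; j,i) \leq Y(w; j,i)$ for all $(j,i) \in \lambda$''. Then I would invoke Proposition 3.3.1, whose conclusion is precisely that this family of inequalities holds if and only if $T(l,k) \in B(T, w; l,k)$ for every $(l,k) \in \lambda$. Chaining the three equivalences yields the theorem.

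Since the genuine mathematical work has already been carried out — the jeu-de-taquin analysis behind Theorem 2.5.5 and the scanning-path / EWIS bookkeeping behind Proposition 3.3.1 — I do not expect any real obstacle at this stage. The only points that need care are bookkeeping ones: confirming that $R(T)$, $S(T)$, and $Y_\lambda(w)$ all have shape $\lambda$, so that the box-by-box comparison is meaningful, and checking that the index ranges for $(j,i)$ and $(l,k)$ in the cited statements match the quantifiers in the present statement. It is also worth remarking that this reformulation is exactly what makes the theorem useful downstream (for instance in Proposition 3.7.1), since it trades the global requirement ``$R(T) \leq Y_\lambda(w)$'' for a local, entry-by-entry membership test against the sets $B(T, w; l, k)$.
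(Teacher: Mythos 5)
Your proposal is correct and matches the paper's own (very short) justification exactly: the paper derives Theorem 3.4.1 by combining Theorem 2.5.5 (identifying $R(T)$ with $S(T)$) with Proposition 3.3.1 and the definition of $\mathcal{D}_\lambda(w)$, which is precisely your chain of equivalences. No gaps; the bookkeeping points you flag (matching shapes and index ranges) are indeed the only things to check.
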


Now recall Theorem 1.18.2, which says that $\chi_{\lambda, w}(t) = \sum_{T \in \mathcal{D}_\lambda(w)} wt(T)$.

\begin{cor}  The Demazure character $\chi_{\lambda, w} (t)$ is the sum of $wt(T)$ over all semistandard tableaux $T$ such that $T(l,k) \in B(T, w; l, k)$ for all $(l,k) \in \lambda$.
\end{cor}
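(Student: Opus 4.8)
The plan is to obtain this corollary as an immediate consequence of the two results that precede it, with no new argument required. First I would recall Theorem 1.18.2, which asserts that $\chi_{\lambda, w}(t) = \sum_{T \in \mathcal{D}_\lambda(w)} wt(T)$, where by definition $\mathcal{D}_\lambda(w) = \{ T \in \mathcal{T}_\lambda \mid R(T) \leq Y_\lambda(w) \}$ is the set of Demazure tableaux for $\lambda$ and $w$. The only task is to replace the indexing set of this sum by the entry-wise description just established.

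Next I would invoke Theorem 3.4.1, which states that a semistandard tableau $T$ of shape $\lambda$ belongs to $\mathcal{D}_\lambda(w)$ if and only if $T(l,k) \in B(T, w; l, k)$ for all $(l,k) \in \lambda$. In set form this says precisely that $\mathcal{D}_\lambda(w) = \{ T \in \mathcal{T}_\lambda \mid T(l,k) \in B(T, w; l, k) \text{ for all } (l,k) \in \lambda \}$. Substituting this equality of sets into the summation formula of Theorem 1.18.2 gives $\chi_{\lambda, w}(t) = \sum wt(T)$, the sum running over exactly those semistandard tableaux $T$ satisfying $T(l,k) \in B(T, w; l, k)$ for all $(l,k) \in \lambda$, which is the assertion.

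There is no genuine obstacle at this stage: the corollary is a one-line restatement once Theorem 3.4.1 is available. The substantive content has already been carried by Proposition 3.3.1, which translates the comparison $S(T) \leq Y_\lambda(w)$ into membership of the entries of $T$ in their local condition sets, together with Theorem 2.5.5, which identifies the scanning tableau $S(T)$ with the right key $R(T)$ so that Proposition 3.3.1 may be applied to the defining condition of $\mathcal{D}_\lambda(w)$; these two together are exactly what yields Theorem 3.4.1. Accordingly, the write-up of this corollary should be brief, simply citing Theorems 1.18.2 and 3.4.1 and performing the substitution of indexing sets.
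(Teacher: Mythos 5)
Your proposal is correct and matches the paper's own treatment: the corollary is obtained exactly by substituting the description of $\mathcal{D}_\lambda(w)$ from Theorem 3.4.1 into the summation formula of Theorem 1.18.2, which is why the paper states it without further argument. Your tracing of the underlying content back to Proposition 3.3.1 and Theorem 2.5.5 is also accurate.
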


So an assignment of values from $[n]$ to the boxes in $\lambda$ is a Demazure tableau for $w$ if and only if $T(l,k) \in [T(l-1,k),T(l,k+1)-1] \cap B(T,w;l,k)$.  The set $\mathcal{T}_\lambda$ of all semistandard tableaux of shape $\lambda$ may be recursively constructed ``from the southwest'' via a backtracking procedure by requiring that each successively proposed additional entry $T(l, k)$ be selected from $[T(l-1,k) , T(l, k+1)-1]$.  From Theorem 3.4.1, since the definition of $B(T,w;l,k)$ refers only to entries ``lying to the southwest'', it can be seen that the set $\mathcal{D}_\lambda(w)$ of all Demazure tableaux for $w$ of shape $\lambda$ may be similarly constructed:  Now require $T(l,k) \in [T(l-1,k), T(l, k+1)-1] \cap B(T,w;l,k)$.  (However, for the actual computation of $\mathcal{D}_\lambda(w)$, one should also consider the generation procedure described on p. 281 of [Le].)

\section{Convex Polytopes}

\vspace{.5pc}\hspace{4mm}  Fix $\lambda \in \Lambda^+$ and $w \in W$.  In this section we consider the set $\mathcal{D}_\lambda(w)$ of Demazure tableaux as a subset of integral Euclidean space.  Recall that $N = \lambda_1 + ... + \lambda_n$.  Consider each $T \in \mathcal{D}_\lambda(w)$ as a point in $\mathbb{Z}^N$ by reading its entries in some fixed order.  A subset $\mathcal{S}$ of $\mathbb{Z}^N$ is a \textit{convex polytope} if for some $m \geq 1$ there exists an $m$ x $N$ real matrix $A$ and an $m$ x $1$ column vector $b$ such that $\mathcal{S}$ is the set of integral solutions to the inequality $Ax \leq b$.  From this definition it can be seen that if a convex polytope contains the endpoints of a line segment in $\mathbb{R}^N$, then it must also contain all of the integral points on that line segment between those endpoints.

\section{A Lemma for 312-Avoiding Permutations}

Fix $w \in W$ and express it in one rowed form as $(x_1, ... , x_n)$.  The permutation $w$ is \textit{312-avoiding} if there does not exist $1\leq a < b < c \leq n$ such that $x_b < x_c < x_a$; otherwise $w$ is \textit{312-containing}.  We say a column in a tableau has a \textit{gap} if two consecutive entries differ by more than one.  A key $K$ of shape $\lambda$ has a \textit{312-containing gap} if there exists an index $1 \leq j \leq \lambda_1-1$ such that the $j$th column has a gap somewhere below the entry that appears in the $j$th column but does not appear in the $(j+1)$st column.

\begin{lem}
Let $\lambda$ be the $n$-partition $(d, ... , d, d-1, ... , 2, 1)$ for some $1 \leq d \leq n$, and let $w \in W$.  If $w$ is 312-avoiding then $Y_\lambda(w)$ has no 312-containing gaps.
\end{lem}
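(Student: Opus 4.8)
The plan is to prove the contrapositive: if $Y_\lambda(w)$ has a 312-containing gap, then the one-rowed form $(x_1,\dots,x_n)$ of $w$ contains a 312-pattern. First I would record the shape data. For $\lambda = (d,\dots,d,d-1,\dots,2,1)$ the column lengths are $\zeta_j = n-j+1$ for $1 \le j \le d$, so by the description of $Y_\lambda(w)$ from Section 1.18 the $j$th column of $Y_\lambda(w)$ is $\{x_1,\dots,x_{s}\}$ written in increasing order and the $(j+1)$st column is $\{x_1,\dots,x_{s-1}\}$ written in increasing order, where $s := n-j+1$. Consequently the entry that occurs in column $j$ but not in column $j+1$ is precisely $x_s$. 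Note also that a column of $Y_\lambda(w)$ equal to all of $[n]$ has no gap, so the presence of a gap in column $j$ forces $s \le n-1$.

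Next I would unpack the hypothesized gap. Since each column of $Y_\lambda(w)$ is strictly increasing from top to bottom, a gap lying below the entry $x_s$ means there are entries $p < q$ that are consecutive in the sorted set $\{x_1,\dots,x_s\}$, with $x_s < p < q$ and $q - p \ge 2$. Choose any integer $m$ with $p < m < q$. Because $p$ and $q$ are consecutive in $\{x_1,\dots,x_s\}$, none of $x_1,\dots,x_s$ lies strictly between them, so $m \notin \{x_1,\dots,x_s\}$; since $m \in [n]$, there is an index $c$ with $s+1 \le c \le n$ and $x_c = m$. Likewise $q \in \{x_1,\dots,x_s\}$ and $q \ne x_s$ (as $q > x_s$), so $q = x_b$ for some index $b$ with $1 \le b \le s-1$.

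Finally I would read off the pattern: the indices satisfy $b \le s-1 < s < c$, and the values satisfy $x_s < p < m < q$, hence $x_s < x_c < x_b$. Thus $(b,s,c)$ is a triple of positions exhibiting a 312-pattern in $w$, so $w$ is 312-containing, which is the contrapositive of the claim.

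I do not anticipate a genuine obstacle; the only points requiring care are the two bookkeeping facts used above — that the distinguished entry of column $j$ is $x_s$ with $s = n-j+1$, and that the ``missing'' integer $m$ inside the gap must occur at a position strictly to the right of $s$ — both of which follow immediately from $\zeta_j = n-j+1$ for these $\lambda$. The argument in fact uses only that consecutive column lengths of $\lambda$ drop by exactly one, so it would go through verbatim for any shape with that property; for the staircase shapes it is automatic.
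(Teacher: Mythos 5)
Your proof is correct and takes essentially the same route as the paper's: prove the contrapositive, use $\zeta_j = n+1-j$ to identify the distinguished entry of column $j$ as $x_{n+1-j}$, realize each value omitted by the gap as some $x_c$ with $c > n+1-j$ and an entry below the gap (your $q$) as some $x_b$ with $b < n+1-j$, giving the pattern $x_{n+1-j} < x_c < x_b$. One small bookkeeping point: the gap may occur immediately below the distinguished entry, so your condition $x_s < p$ should be $x_s \le p$; your argument goes through verbatim in that case, since it only uses $m > x_s$ and $q > x_s$.
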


\begin{proof}
We will prove the contrapositive.  First, let $1 \leq j \leq d-1$ be an index for which $Y_\lambda(w)$ has a 312-containing gap, and let $b = n+1-j$.  Here $x_b$ is the entry that appears in the $j$th column but not the $(j+1)$st column of $Y_\lambda(w)$.  Each value omitted by the gap is of the form $x_c > x_b$ for some $c \in [b+1,n]$.  Choose any such $x_c$.  Every entry in the $j$th column below the gap below $x_b$ is of the form $x_a > x_c$ for some $a \in [1, b-1]$.  Choose any such $x_a$.  Then $w$'s one rowed form contains $(..., x_a , ... , x_b , ... , x_c , ...)$ with $x_b < x_c < x_a$, i.e. $w$ is 312-containing.

\end{proof}

\section{A Sufficient Condition for Convexity}

In this section we prove that sets of Demazure tableaux for 312-avoiding $w$'s do not require right key computations.  Moreover, the simpler description implies that $\mathcal{D}_\lambda(w)$ is a convex polytope.  The following result holds for all $1 \leq d \leq n$; this includes the most important ``staircase shape'' case when $d = n$, which yields $\lambda = (n, n-1, ... , 2, 1)$.

\begin{prop}
Let $\lambda$ be the $n$-partition $(d, ... , d, d-1, ... , 2, 1)$ for some $1 \leq d \leq n$.  If $w$ is 312-avoiding, then $\mathcal{D}_\lambda(w) = \{ T \in \mathcal{T}_\lambda \hspace{1mm} | \hspace{1mm} T \leq Y_\lambda(w) \}$.
\end{prop}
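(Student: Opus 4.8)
The plan is to prove the two inclusions separately. The inclusion $\mathcal{D}_\lambda(w) \subseteq \{ T \in \mathcal{T}_\lambda \mid T \leq Y_\lambda(w) \}$ requires neither the staircase shape nor $312$-avoidance: for any $T \in \mathcal{T}_\lambda$ one has $T(l,k) \leq S(T;l,k)$ at every box $(l,k) \in \lambda$, because $S(T;l,k)$ is the last member of an earliest weakly increasing subsequence whose first member is $T(l,k)$, as was noted inside the proof of Proposition 3.3.1. Thus $T \leq S(T)$ entrywise, and $S(T) = R(T)$ by Theorem 2.5.5, so $R(T) \leq Y_\lambda(w)$ forces $T \leq Y_\lambda(w)$. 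Everything of substance is in the reverse inclusion, and I would approach it through the new description of Demazure tableaux: by Theorem 3.4.1 it suffices to show that whenever $T \in \mathcal{T}_\lambda$ satisfies $T \leq Y_\lambda(w)$, then $T(l,k) \in B(T,w;l,k)$ for every $(l,k) \in \lambda$.

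So I would fix $(l,k) \in \lambda$ and examine $B(T,w;l,k) = \bigcap_{j=1}^{l} ( \bigcup_{i=a(j)}^{b(j)-1} [ E(l,k;j,i), Y(w;j,i) ] )$ one value of $j$ at a time. The term $j=l$ is the single interval $[k, Y(w;l,k)]$, and it contains $T(l,k)$ since $T$ is semistandard (so $T(l,k) \geq k$) and $T \leq Y_\lambda(w)$ (so $T(l,k) \leq Y(w;l,k)$). For a fixed $j$ with $1 \leq j \leq l-1$, let $i_0$ be the unique index in $[a(j),b(j)-1]$ for which $(l,k)$ lies on the scanning path $P(T;j,i_0)$. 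Then $E(l,k;j,i_0)$ is, by definition, the entry of $T$ immediately before $T(l,k)$ along the weakly increasing subsequence carried by that path, so $E(l,k;j,i_0) \leq T(l,k)$ with nothing to prove. Hence $T(l,k)$ belongs to the $j$-th term as soon as $T(l,k) \leq Y(w;j,i_0)$, and establishing this inequality for every $j < l$ is the whole remaining task.

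The inequality $T(l,k) \leq Y(w;j,i_0)$ is the crux, and here the staircase shape and Lemma 3.6.1 become essential. Write $v = T(l,k)$. Since $Y(w;j,i_0)$ is the $i_0$-th smallest entry of the $j$-th column of $Y_\lambda(w)$, the inequality $v \leq Y(w;j,i_0)$ is equivalent to: at most $i_0 - 1$ of the values $x_1,\dots,x_{n+1-j}$ are smaller than $v$. From $v \leq Y(w;l,k)$ we already know that at most $k-1$ of $x_1,\dots,x_{n+1-l}$ are smaller than $v$; and because $\lambda$ is a staircase shape, the columns of $Y_\lambda(w)$ have strictly decreasing lengths, so passing from the $l$-th column to the $j$-th column of $Y_\lambda(w)$ adjoins only the $l-j$ further values $x_{n+2-l},\dots,x_{n+1-j}$. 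These are exactly the values whose placement relative to the earlier columns is constrained by $w$ being $312$-avoiding, via Lemma 3.6.1: no column of $Y_\lambda(w)$ has a $312$-containing gap, so the block of entries below each newly adjoined value is gap-free. The task therefore reduces to showing that at most $(i_0-1)-(k-1) = i_0 - k$ of the adjoined values $x_{n+2-l},\dots,x_{n+1-j}$ lie below $v$. Here $i_0$ is the row in which the path $P(T;j,i_0)$ departs column $j$ and $k$ is the row in which it arrives at column $l$, so $i_0 - k$ is the total downward displacement of that path; I would match this displacement against the adjoined small values column by column, using at each column the no-gap structure of $Y_\lambda(w)$ to control how far the newly adjoined value has pushed down that column's entries. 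Setting up this correspondence cleanly --- in particular, finding an inductive statement (an induction on $l-j$, or a sweep of the columns of $Y_\lambda(w)$ from the left) strong enough to carry the gap-freeness through all the columns simultaneously --- is the step I expect to be the main obstacle; the remainder is routine bookkeeping with the definitions of $a(j)$, $b(j)$, $E(l,k;j,i)$, and $Y(w;j,i)$.
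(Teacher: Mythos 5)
Your first inclusion and your overall target for the second are fine: showing $T(l,k)\leq Y(w;j,i_0)$ for the unique path $P(T;j,i_0)$ through each box $(l,k)$ would indeed finish the proof (though note this is just a direct proof of $S(T)\leq Y_\lambda(w)$, so the detour through Theorem 3.4.1 and the sets $B(T,w;l,k)$ buys you nothing). The genuine gap is the crux step, which you acknowledge you have not carried out --- and, more seriously, the counting claim you propose to reduce it to is false. You claim it suffices that at most $i_0-k$ of the adjoined values $x_{n+2-l},\dots,x_{n+1-j}$ lie below $v=T(l,k)$. Counterexample: let $n=4$, $\lambda=(4,3,2,1)$, and let $w$ have one rowed form $(2,3,4,1)$, which is 312-avoiding; here the columns of $Y_\lambda(w)$ are $(1,2,3,4)$, $(2,3,4)$, $(2,3)$, $(2)$. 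Take $T$ with columns $(1,2,3,4)$, $(1,2,3)$, $(1,2)$, $(1)$, so $T\leq Y_\lambda(w)$. The path $P(T;1,4)$ is just $((1,4))$ since $T(2,3)=3<4$, and then $P(T;1,3)=((1,3),(2,3))$. So for the box $(l,k)=(2,3)$ with $j=1$ we have $i_0=3=k$, hence your budget allows at most $i_0-k=0$ adjoined values below $v=3$; but the single adjoined value is $x_4=1<3$. The desired conclusion $v\leq Y(w;1,3)=3$ still holds because only one of $x_1,x_2,x_3$ (namely $x_1=2$) lies below $v$, i.e.\ the count in column $l$ is strictly less than $k-1$ --- your per-column budgets discard exactly this slack, so no column-by-column matching of the displacement against the adjoined values can work in the form you describe.

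For comparison, the paper avoids isolating the single path interval altogether. It argues by induction on the number of columns $d$ and shows, for each fixed $j<l$, that the entire union $\bigcup_{i=a(j)}^{b(j)-1}[E(l,k;j,i),Y(w;j,i)]$ contains one of the two intervals $[T(l-1,k),Y(w;l,k)]$ or $[T(l-1,k),T(l,k+1)-1]$, each of which contains $T(l,k)$. The 312-avoidance enters through Lemma 3.6.1 (no 312-containing gaps), which forces the relevant upper endpoints $Y(w;j,i)$ to be consecutive integers, and the induction hypothesis applied to the truncation $[T]_{l-1}$ guarantees $E(l,k;j,i)\leq Y(w;j,i)$, so the nonempty intervals chain together to cover the needed range. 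If you want to salvage your route, you would need an inductive statement that controls the total number of entries of column $j$ of $Y_\lambda(w)$ below $v$ (keeping track of the slack in intermediate columns), which is essentially what the paper's union-covering argument accomplishes.
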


Since all $T$ in $\mathcal{D}_\lambda(w)$ here are semistandard and merely less than or equal to $Y_\lambda(w)$, this proposition tells us that the local condition set $B(T, w; l,k)$ has no effect on the possible values for $T(l,k)$ for any $(l,k) \in \lambda$.  Set $Y(w;l,k) := Y_\lambda(w;l,k)$.

\begin{proof}

Let $\lambda$ be an $n$-partition and let $w \in W$.  Theorem 3.4.1 says that $\mathcal{D}_\lambda(w) = \{ T \in \mathcal{T}_\lambda \hspace{1mm} | \hspace{1mm} T(l,k) \in B(T, w;l,k)$ for all $(l,k) \in \lambda \}$.  Let $T \in \mathcal{D}_\lambda(w)$ and $(l,k) \in \lambda$.  Taking $j = l$ in the intersection defining $B(T,w;l,k)$, we see that $T(l,k) \in [k, Y(w; l,k)]$.  So $T(l,k) \leq Y(w;l,k)$ (for any $\lambda$ and any $w$).  Hence $\mathcal{D}_\lambda(w) \subseteq \{ T \in \mathcal{T}_\lambda \hspace{1mm} | \hspace{1mm} T \leq Y_\lambda(w) \}$.

To show the other containment $\{ T \in \mathcal{T}_\lambda \hspace{1mm} | \hspace{1mm} T \leq Y_\lambda(w) \} \subseteq \mathcal{D}_\lambda(w)$, we must now assume that $\lambda$ is of the assumed form with $d$ columns and that $w$ is 312-avoiding.  For this containment we use induction on $d$.  Let $[\lambda]_d$ denote the result of truncating $\lambda$ down to its first $d$ columns and let $[T]_d$ denote the result of truncating $T$ down to its first $d$ columns.

Suppose $d = 1$.  Here $\lambda = (1, ... , 1)$, so $T \in \mathcal{T}_\lambda$ is a semistandard column of length $n$.  Clearly $S(T) = T$ for all $T \in \mathcal{T}_\lambda$ here.  Hence $\{ T \in \mathcal{T}_\lambda \hspace{1mm} | \hspace{1mm}  T \leq Y_\lambda(w) \} = \{ T \in \mathcal{T}_\lambda  \hspace{1mm} |  \hspace{1mm} S(T) \leq Y_\lambda(w) \} = \mathcal{D}_\lambda(w)$.

Now suppose $1 < d \leq n$, and so $\lambda = (d, ... , d, d-1, ... , 2, 1)$ has $d$ columns.  Assume $\{ T \in \mathcal{T}_\mu \hspace{1mm} | \hspace{1mm}  T \leq Y_\mu(w) \} =  \mathcal{D}_\mu(w)$ when $\mu$ is of this form with fewer than $d$ columns.  Let $T \in \mathcal{T}_\lambda$ be such that $T \leq Y_\lambda(w)$, and let $(l,k) \in \lambda$.  We know that $T(l,k) \in [T(l-1,k), T(l,k+1)]$, and that $T(l,k) \in [T(l-1,k), Y(w;l,k)]$.  So it suffices to fix a column and to show that the corresponding union within the intersection $B(T,w;l,k)$ contains $[T(l-1,k),T(l,k+1)-1]$ or $[T(l-1,k),Y(w;l,k)]$.

Note that when $l=1$, the set $B(T,w;1,k)$ is exactly the latter interval.  So assume $2 \leq l \leq d = \lambda_1$.  Fix $1 \leq j < l$.  Recall that $E(l,k;j,a(j)) = T(l-1,k)$.  Note that Case 2 below is the complement of the union of Cases 1 and 3.

Case 1:  Suppose $Y(w; j, a(j)) \geq Y(w; l,k)$.  In this case $\bigcup_{i=a(j)}^{b(j)-1} [ E(l,k;j,i) , Y(w; j,i)]$ contains $[E(l,k;j,a(j)) , Y(w; j,a(j))] = [T(l-1,k), Y(w; j,a(j))]$, which in turn contains $[T(l-1,k) , Y(w; l,k)]$ here.

Case 2:  Suppose $Y(w; j, a(j)) < Y(w; l,k)$ and $Y(w; j, b(j)-1) \geq Y(w; l,k)$.  Since $a(j) \geq k$, the first assumption in this case implies that there exists some $m \leq a(j)$ such that $Y(w; j, m) =:p$ does not appear in the $l$th column of $Y_\lambda(w)$.  Let $r$ be maximal such that $p$ does not appear in the $r$th column of $Y_\lambda(w)$; so $j < r \leq l$.  Let $q$ be the number of boxes below $p$ in the $(r-1)$st column.  Lemma 3.6.1 implies that the lowest $(q+1)$ boxes in this column contain the consecutive entries $p, p+1, p+2, ... , p+q$.  Since $m \leq a(j)$, we have $p \leq Y(w;j,a(j))$.  Since $Y_\lambda(w)$ is a key, the entry $Y(w;l,k)$ also appears in the $(r-1)$st column, hence $Y(w;l,k) \leq p+q$.  Then by the first assumption, we see that  $Y(w;j,a(j)), Y(w;l,k) \in [p, p+q]$.

Since $Y_\lambda(w)$ is a key, all of $[p, p+q]$ also appears in the $j$th column.  Let $h = Y(w;l,k) - Y(w;j,a(j))$.  From this we see that $Y(w;j,a(j)+i) = Y(w;j,a(j)) + i$ for $1 \leq i \leq h$.  By the second assumption, $Y(w;l,k)$ appears weakly above $Y(w;j,b(j)-1)$ in the $j$th column.  Thus $\bigcup_{i=a(j)}^{b(j)-1} [ E(l,k;j,i) , Y(w; j,i)]$ contains the intervals $[E(l,k;j,a(j)),Y(w;j,a(j))]$,  $[E(l,k;j,a(j)+1); Y(w;j,a(j)) + 1]$, $ ... $ , $[E(l,k;j,a(j)+h),Y(w;l,k)]$.

Note that the condition $T \leq Y_\lambda(w)$ implies that $[T]_{l-1} \leq Y_{[\lambda]_{l-1}}(w)$.  Since $1 \leq l-1 \leq d-1$, the induction hypothesis gives $[T]_{l-1} \in \mathcal{D}_{[\lambda]_{l-1}}$.  Using the scanning characterization of a Demazure tableau, it can be seen that any value in the path $P(T;j,i)$ is less than or equal to $Y(w;j,i)$.  Since $E(l,k;j,i) \in [T]_{l-1}$ when $a(j) \leq i \leq b(j)-1$, we have $E(l,k;j,i) \leq Y(w;j,i)$.  Hence $[E(l,k;j,i),Y(w;j,i)]$ contains at least $Y(w;j,i)$ for $a(j) \leq i \leq b(j)-1$.  So the union above contains $[T(l-1,k), Y(w;l,k)]$.

Case 3:  Suppose $Y(w; j, b(j)-1) < Y(w; l,k)$.  Since $Y_\lambda(w)$ is a key, the entry $Y(w;l,k)$ appears in the $j$th column.  So the assumption here implies $b(j) \leq \zeta_j$.  Thus in this case the location $(l, k+1)$ must be in $\lambda$.  Since $Y(w;j,a(j)) \leq Y(w;j,b(j)-1)$, the assumption here implies, as in Case 2, that $p, p+1, p+2, ... , p+q$ all appear consecutively in the $j$th column, where $p$ and $q$ are defined as in Case 2.  Again we have $Y(w;j,a(j)), Y(w;l,k) \in [p, p+q]$.  Consequently we see that $Y(w;j,b(j)-1) \in [p, p+q-1]$.  Thus $Y(w;j,b(j)) \in [p+1,p+q]$.

Now we have that $Y(w;j,a(j)+i) = Y(w;j,a(j))+i$ for $1 \leq i \leq b(j)-a(j)$.  So $\bigcup_{i=a(j)}^{b(j)-1} [ E(l,k;j,i) , Y(w; j,i)] $ contains the intervals $[E(l,k;j,a(j)), Y(w;j,a(j))]$,  $[E(l,k;j,a(j)+1),Y(w;j,a(j))+1]$ , ... , $[E(l,k;j,b(j)-1),Y(w;j,b(j))-1] $.  As in Case 2, their union contains $[T(l-1,k), Y(w;j,b(j))-1]$.  Here we have that $(l,k+1) \in P(T;j,b(j))$, and so $T(l,k+1) \leq Y(w; j,b(j))$.  Hence $[T(l-1,k), Y(w;j,b(j)) - 1]$ contains $[T(l-1,k), T(l,k+1) - 1]$.

So in all three cases, we see that $\bigcup_{i=a(j)}^{b(j)-1} [ E(l,k;j,i) , Y(w; j,i)]$ contains $[T(l-1,k), T(l,k+1) - 1]$ or $[T(l-1,k), Y(w;l,k)]$.  When $j=l$, the inclusion of $[T(l-1,k), Y(w;l,k)]$ is immediate.  Thus $T(l,k)$ is in the intersection $B(T, w;l, k)$ for all $(l,k) \in \lambda$.  Hence $\{ T \in \mathcal{T}_\lambda \hspace{1mm} | \hspace{1mm} T \leq Y_\lambda(w) \} \subseteq \mathcal{D}_\lambda(w)$ when $\lambda$ is of the form with $d$ columns.

\end{proof}

\vspace{.5pc}  This Proposition quickly implies:

\begin{cor}
Let $\lambda$ be the $n$-partition $(d, ... , d, d-1, ... , 2, 1)$ for some $1 \leq d \leq n$.  If $w$ is 312-avoiding, then $\mathcal{D}_\lambda(w)$ is a convex polytope.
\end{cor}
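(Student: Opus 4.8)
The plan is to read Corollary 3.7.2 directly off Proposition 3.7.1 together with the matrix–inequality definition of a convex polytope from Section 3.5. Under the hypotheses of the corollary, Proposition 3.7.1 gives $\mathcal{D}_\lambda(w) = \{ T \in \mathcal{T}_\lambda \mid T \leq Y_\lambda(w) \}$, so it suffices to produce a real matrix $A$ and a column vector $b$ such that the integral solutions of $Ax \leq b$, read in the fixed coordinate order of Section 3.5 that regards a filling of $\lambda$ as a point of $\mathbb{Z}^N$, are exactly the points corresponding to semistandard tableaux $T$ of shape $\lambda$ with $T \leq Y_\lambda(w)$.

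First I would list the linear inequalities in the coordinates $T(j,i)$, $(j,i)\in\lambda$, that carve out $\mathcal{T}_\lambda$ inside $\mathbb{Z}^N$: the lower bound $T(j,i) \geq 1$ for every box; the column-strict conditions $T(j,i) + 1 \leq T(j,i+1)$ whenever $(j,i)$ and $(j,i+1)$ both lie in $\lambda$; and the row-weak conditions $T(j,i) \leq T(j+1,i)$ whenever $(j,i)$ and $(j+1,i)$ both lie in $\lambda$. Each of these has integer, hence real, coefficients, and an integral point of $\mathbb{Z}^N$ satisfies all of them precisely when the associated filling of $\lambda$ is a semistandard tableau in the sense of Section 1.3 (the remaining requirement that entries come from $[n]$, i.e. $T(j,i) \leq n$, is supplied for free below). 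To this system I would append, for every $(j,i)\in\lambda$, the domination inequality $T(j,i) \leq Y_\lambda(w;j,i)$, whose right-hand side is a fixed integer in $[n]$ depending only on $w$ and $(j,i)$; since $Y_\lambda(w)$ itself has entries in $[n]$, these also enforce $T(j,i) \leq n$. Stacking all of these inequalities as the rows of $A$ (and the corresponding bounds as $b$), an integral point of $\mathbb{Z}^N$ lies in the solution set of $Ax \leq b$ exactly when it encodes a semistandard tableau $T$ with $T \leq Y_\lambda(w)$, i.e. exactly when it encodes an element of $\mathcal{D}_\lambda(w)$ by Proposition 3.7.1. By the definition in Section 3.5, $\mathcal{D}_\lambda(w)$ is therefore a convex polytope.

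There is no real obstacle here beyond Proposition 3.7.1 itself, which does all the work: it has already eliminated the genuinely nonlinear ingredient, namely the local condition sets $B(T,w;l,k)$, whose defining intervals depend on the entries of $T$ through the quantities $E(l,k;j,i)$. Once those are removed and $\mathcal{D}_\lambda(w)$ is cut out by the fixed linear inequalities above, the only remaining point is the routine bookkeeping that this translation is an exact equivalence — that every clause of ``semistandard tableau of shape $\lambda$ with entries in $[n]$ satisfying $T \leq Y_\lambda(w)$'' is represented by some row of the system and that no extraneous integral points are admitted — after which the conclusion is immediate.
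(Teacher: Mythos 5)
Your proposal is correct and matches the paper's own argument: the paper likewise invokes Proposition 3.7.1 and then encodes the semistandard conditions ($T(l,k) \geq T(l-1,k)$, $T(l,k) \leq T(l,k+1)-1$) together with the domination conditions $T(l,k) \leq Y(w;l,k)$ as a linear system $AT \leq b$, concluding convexity from the Section 3.5 definition. Your extra bookkeeping (the lower bounds $T(j,i)\geq 1$ and the observation that $T(j,i)\leq n$ comes for free from $Y_\lambda(w)$) is just a slightly more explicit version of the same step.
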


\begin{proof}
\vspace{.5pc}  The inequalities $T(l,k) \geq T(l-1,k)$, $T(l,k) \leq T(l,k+1) - 1$, and $T(l,k) \leq Y(w;l,k)$ for all $(l,k) \in \lambda$ can be encoded into the system of inequalities $AT \leq b$ for some $m$ x $N$ matrix A and an $m$ x 1 vector $b$.

\vspace{.5pc}  Thus $\{ T \in T_\lambda \hspace{1mm} | \hspace{1mm} T \leq Y_\lambda(w) \} = \{ T \hspace{1mm} | \hspace{1mm} AT \leq b \}$.

\end{proof}

\section{A Necessary Condition for Convexity}

In this section we will show that if $\lambda = (n, n-1, ... , 2, 1)$ and $w$ is 312-containing, then the set of Demazure tableaux $\mathcal{D}_\lambda(w)$ does not contain a particular semistandard tableau that lies on the line segment defined by two other particular tableaux that are in $\mathcal{D}_\lambda(w)$.  Thus we obtain a necessary condition for $\mathcal{D}_\lambda(w)$ to be a convex polytope, namely that it be 312-avoiding.

\begin{prop}  Let $\lambda = (n, n-1, ... , 2, 1)$.  If $w$ is 312-containing, then $\mathcal{D}_\lambda(w)$ is not a convex polytope.
\end{prop}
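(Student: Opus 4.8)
The plan is to produce two Demazure tableaux $T_1, T_2 \in \mathcal{D}_\lambda(w)$ together with a semistandard tableau $T$ that is an integral point of the segment joining $T_1$ and $T_2$ in $\mathbb{R}^N$ but does not lie in $\mathcal{D}_\lambda(w)$. By the observation following the definition of convex polytope in Section 3.5, a convex polytope containing $T_1$ and $T_2$ contains every integral point of that segment, so the existence of such a triple proves that $\mathcal{D}_\lambda(w)$ is not a convex polytope.

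First I would translate ``$w$ is 312-containing'' into structure on $Y_\lambda(w)$. Since $\lambda = (n, n-1, \dots, 2, 1)$ has a column of every length $1, \dots, n$, and the length-$k$ column of $Y_\lambda(w)$ is $\{x_1, \dots, x_k\}$ listed increasingly, a 312-pattern $x_b < x_c < x_a$ with $a < b < c$ forces the column $j := n+1-b$ to have distinguished entry $x_b$ (the entry of column $j$ missing from column $j+1$), to contain $x_a$ strictly below $x_b$, and to omit the value $x_c$ lying strictly between them. Consequently column $j$ has two consecutive entries $u < v$, lying weakly below $x_b$, with $v \ge u+2$; this is the converse of Lemma 3.6.1 for these $\lambda$. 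Let $(j, \sigma)$ be the box at the top of this gap, so that $Y_\lambda(w; j, \sigma) = u$ and $Y_\lambda(w; j, \sigma+1) = v$, and let $B^{\ast} := (j+1, \sigma)$ be the box of column $j+1$ immediately east of it; one checks $B^{\ast} \in \lambda$ and $Y_\lambda(w; B^{\ast}) = v$.

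Now I would build the three tableaux, all obtained from $Y_\lambda(w)$ by changing only entries of column $j+1$ near $B^{\ast}$. Recall $Y_\lambda(w)$ is itself a Demazure tableau, since a key is its own right key and hence $R(Y_\lambda(w)) = Y_\lambda(w) \le Y_\lambda(w)$. Let $T_2$ be $Y_\lambda(w)$ with the entries of column $j+1$ strictly above $B^{\ast}$ lowered row by row --- proceeding upward from just above $B^{\ast}$, and no farther than necessary --- so that $B^{\ast}$ can be lowered while column $j+1$ stays strictly increasing, and let $T_1$ be $T_2$ with the entry at $B^{\ast}$ further lowered from $v$ to $u$. (When the gap of column $j$ sits immediately below $x_b$ no entries above $B^{\ast}$ need to be changed, and one may take $T_2 := Y_\lambda(w)$.) Then $T_1$ and $T_2$ agree in every coordinate except $B^{\ast}$, where they take the values $u$ and $v$; let $T$ be the tableau that agrees with them elsewhere and has $T(B^{\ast}) := u+1$. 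Since $u < u+1 < v$, the tableau $T$ is semistandard and is an integral point strictly inside the segment $[T_1, T_2]$.

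It remains to verify the three memberships. For $T_1, T_2 \in \mathcal{D}_\lambda(w)$ I would use Theorem 3.4.1, or equivalently the scanning description of $R$: the value $v$ at $B^{\ast}$ is carried off by the scanning path originating at the box $(j, \sigma+1)$ directly below $(j, \sigma)$, whereas the value $u$ at $B^{\ast}$ merely reproduces $Y_\lambda(w; j, \sigma) = u$ and so causes no overshoot at $(j, \sigma)$; one must also check that the lowered chain in column $j+1$, and any rerouting it causes, leave every $S(T_i; l, k) \le Y_\lambda(w; l, k)$. For $T \notin \mathcal{D}_\lambda(w)$ I would track one scanning path: the value $m := u+1$ at $B^{\ast}$ is too small ($m < v$) to be picked up by the path originating at $(j, \sigma+1)$, but is weakly larger than $u = T(j, \sigma)$, so it is picked up by the path originating at $(j, \sigma)$, which then ends at a value at least $m$; hence $S(T; j, \sigma) \ge u+1 > u = Y_\lambda(w; j, \sigma)$ and $R(T) \not\le Y_\lambda(w)$. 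I expect this scanning-path bookkeeping to be the crux: controlling how the earliest weakly increasing subsequence rule propagates the single altered column bottom --- so as to keep $T_1$ and $T_2$ inside $\mathcal{D}_\lambda(w)$ while forcing $T$ out --- and arranging the chain of lowered entries uniformly when the 312-pattern sits several columns deep. This is exactly the phenomenon responsible for the non-convexity of the local condition sets $B(T, w; l, k)$ in the presence of a gap.
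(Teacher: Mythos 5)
Your overall strategy is the paper's: exhibit two elements of $\mathcal{D}_\lambda(w)$ and an integral point of the segment between them that is not in $\mathcal{D}_\lambda(w)$, and your argument that the middle tableau fails (a scanning path from $(j,\sigma)$ picks up the intermediate value at $B^{\ast}$ and overshoots $Y(w;j,\sigma)$) is essentially the paper's non-membership computation. The genuine gap is the claim $T_1, T_2 \in \mathcal{D}_\lambda(w)$, which you explicitly defer (``one must also check that the lowered chain \dots leave every $S(T_i;l,k) \le Y_\lambda(w;l,k)$''). This is not a routine verification: the scanning tableau is not monotone under entrywise lowering --- your own triple already demonstrates this, since $T \le T_2$ entrywise yet $T \notin \mathcal{D}_\lambda(w)$ while $T_2 \in \mathcal{D}_\lambda(w)$ --- so lowering $B^{\ast}$ to $u$ and cascading a chain of lowered entries up column $j+1$ can in principle reroute paths originating in columns $1,\dots,j+1$ (including columns strictly left of $j$, whose paths pass through the modified region) and raise some $S$-value above $Y_\lambda(w)$. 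Since your endpoints are generally not keys (the chain values $u-1, u-2, \dots$ need not occur in column $j$), their right keys must actually be controlled, and that bookkeeping, which you acknowledge is the crux, is exactly the missing content; the chain itself is also only specified loosely (``no farther than necessary'').

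The paper avoids this difficulty by a different construction of the endpoints: it chooses the 312 pattern $x_b < x_c < x_a$ with careful minimality conditions, builds two permutations $w'$ and $v$ by swapping entries of $w$'s one rowed form, and takes the endpoints to be the keys $Y_\lambda(w')$ and $Y_\lambda(v)$. Because a key is its own right key, their membership in $\mathcal{D}_\lambda(w)$ reduces to the entrywise inequalities $Y_\lambda(v) < Y_\lambda(w') \le Y_\lambda(w)$, which follow from the swap construction; the only scanning analysis needed is for the middle tableau $T(c)$, obtained by replacing the $x_a$'s along a set $L$ of locations in columns $j+1$ through $j+m$ by $x_c$, which lies at parameter $r = (x_c - x_b')/(x_a - x_b')$ on the segment. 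To repair your argument you would either have to carry out the full path-rerouting analysis for your non-key endpoints (for all originating columns, and for arbitrarily deep gaps), or, more efficiently, replace your endpoints by keys of suitably chosen permutations as the paper does.
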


\begin{proof}

Since $w = (x_1, ... , x_n)$ is 312-containing, there exists some $a < b < c$ such that $x_b < x_c < x_a$.  Amongst such patterns, we specify one that is optimal for our purposes:  Choose $b$ to be minimal.  Then choose $c$ to be minimal with respect to this $b$.  Then choose $a$ so that $x_a$ is minimal with respect to this $x_c$.

These minimal choices lead to the following three conditions:  (i) By the minimality of $b$, there does not exist $a < z < b$ such that $x_b < x_z < x_c$.  (ii) By the minimality of $c$, there does not exist $b < z < c$ such that $x_b < x_z < x_a$.  (iii) By the minimality of $x_a$, there does not exist $z < b$ such that $x_c < x_z < x_a$.  If there exists $e < a$ such that $x_b < x_e < x_c$, choose $d < a$ such that $x_d$ is maximal with respect to this condition; otherwise set $d = b$.  (iv)  By the maximality of $x_d$, there does not exist $z < a$ such that $x_d < x_z < x_c$.

Now let $w' = (x_1' , ... , x_n')$ be the result of swapping $x_b$ with $x_d$ in the one rowed form of $w$; so $x_b' = x_d, x_d' = x_b$, and $x_i' = x_i$ when $i \neq b, d$.  (If $d = b$, then $w' = w$ with $x_b' = x_b$ and $x_d' = x_d$.)  Let $j = n + 1 - b$; so the entry $x_b'$ appears precisely in the 1st through $j$th columns of $Y_\lambda(w')$.  Let $k = b - d$.  The swap producing $w'$ from $w$ replaces the $x_d$'s in the $(j+1)$st through $(j+k)$th columns of $Y_\lambda(w)$ with $x_d' = x_b$ to produce $Y_\lambda(w')$.  (Here the entries in these columns may need to be re-sorted to meet the semistandard criteria.)  So $x_d' \leq x_d$ implies $Y_\lambda(w') \leq Y_\lambda(w)$ via a column-wise argument.

Note that by (i), (iii), and (iv), in the one rowed form of $w'$ there does not exist $z < b$ such that $x_d = x_b' < x_z < x_a$.  Thus we obtain:  (v) The $(j+1)$st through $n$th columns of $Y_\lambda(w')$ do not contain any entries from $[x_b', x_a - 1]$.  Let $(j,i)$ denote the location of the $x_b'$ in the $j$th column of $Y_\lambda(w')$ (and hence $Y_\lambda(w)$).  By (i), (iii), and (iv) we can see that $Y(w';j, i+1) = x_a$.  Since $x_b'$ is the only entry from the $j$th column of $Y_\lambda(w')$ that does not appear in the $(j+1)$st column, we also have that $Y(w'; j+1, i) = x_a$.

Let $v$ be the permutation obtained by swapping $x_b' = x_d$ with $x_a' = x_a$ in $w'$.  Let $m = b - a$.  (Note that $n + 1 - a = j + m$.)  Let $L \subseteq \lambda$ be the set of locations of the $x_a$'s in the $(j+1)$st through $(j+m)$th columns of $Y_\lambda(w')$.  By (v), obtaining $v$ from $w'$ is equivalent to literally replacing the $x_a$'s at every location in $L$ in $Y_\lambda(w')$ with $x_b'$ (and leaving the rest of $Y_\lambda(w')$ unchanged) to obtain $Y_\lambda(v)$.  So $x_b' < x_a$ implies $Y_\lambda(v) < Y_\lambda(w')$.

Let $T(c)$ be the result of literally replacing the $x_a$'s at every location in $L$ in $Y_\lambda(w')$ with $x_c$ (and leaving the rest unchanged).  Note that $Y(w';p,q) = Y(v;p,q) = T(c;p,q)$ for every $(p,q) \notin L$.  Since $Y_\lambda(w')$ and $Y_\lambda(v)$ are keys and hence semistandard, for every location $(p,q) \in L$ we have $Y(w';p,q), Y(v;p,q) \in [Y(w';p,q-1) + 1, Y(w';p,q+1) - 1]$ and $Y(w';p,q), Y(v;p,q) \in [Y(w';p-1,q) , Y(w';p+1,q)]$.  Then since $Y(v;p,q) = x_b' < x_c < x_a = Y(w';p,q)$ for every $(p,q) \in L$, we also have $T(c;p,q) \in [Y(w';p,q-1) + 1, Y(w';p,q+1)-1]$ and $T(c;p,q) \in [Y(w';p-1,q), Y(w';p+1,q)]$ for every $(p,q) \in L$.  The entry at every location in $T(c)$ that is not in $L$ is precisely the entry for that location in $Y_\lambda(w')$.  There is at most one location from $L$ in any column of $Y_\lambda(w')$.  Since there will be no failures of the semistandard conditions within $L$ after placing the same value at each location in $L$, we can conclude that $T(c)$ is semistandard, i.e. we have $T(c) \in \mathcal{T}_\lambda$.

In $\mathbb{R}^N$, consider the line segment $u(s) = Y_\lambda(v) + s(Y_\lambda(w')-Y_\lambda(v))$, where $0 \leq s \leq 1$.  Note that $u(0) = Y_\lambda(v)$ and $u(1) = Y_\lambda(w')$.  Here the value of $s$ only affects the entries at the locations in $L$.  Let $r = \frac{x_c - x_b'}{x_a - x_b'}$; note that $0 < r < 1$.  These entries in $L$ in $u(r)$ are $x_b' + \frac{x_c - x_b'}{x_a-x_b'}(x_a-x_b') = x_c$.  Hence $u(r) = T(c)$.

Consider $S(T(c)):$  Since $T(c;j,i+1) = x_a > x_c = T(c;j+1,i)$, the location $(j+1,i) \notin P(T(c);j,i+h)$ for any $h \geq 1$.  Since $T(c;j,i) = x_b' < x_c$, the location $(j+1,i) \in P(T(c);j,i)$.  Thus $S(T(c);j,i) \geq x_c > x_b' = Y(w;j,i)$.  Hence $S(T(c)) \nleq Y_\lambda(w)$, so $T(c) \notin \mathcal{D}_\lambda(w)$.

Since $Y_\lambda(w')$ and $Y_\lambda(v)$ are keys we have $R(Y_\lambda(w')) = Y_\lambda(w')$ and $R(Y_\lambda(v)) = Y_\lambda(v)$.  Then $Y_\lambda(v) < Y_\lambda(w') \leq Y_\lambda(w)$ implies $Y_\lambda(w')$, $Y_\lambda(v) \in \mathcal{D}_\lambda(w)$.  Thus $u(0), u(1) \in \mathcal{D}_\lambda(w)$ but $u(r) \notin \mathcal{D}_\lambda(w)$.  Therefore $\mathcal{D}_\lambda(w)$ is not a convex polytope.

\end{proof}

\section{Main Result for Convexity with a Discussion of Earlier Work}

In this section we present the main result of Chapter 3, and then compare and contrast its two constituent results to the results of [PS] and [RS].  Combining the foremost case of Corollary 3.7.2 with Proposition 3.8.1, we obtain:

\begin{thm}  Let $\lambda = (n, n-1, ... , 2, 1)$.  Fix $w \in W$.  Then $\mathcal{D}_\lambda(w)$ is a convex polytope if and only if $w$ is 312-avoiding.
\end{thm}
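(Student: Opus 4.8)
The plan is to obtain Theorem 3.9.1 by simply bolting together the two one-directional results already established in this chapter, after specializing the shape appropriately. There is essentially no new combinatorics to do here: all of the substantive work sits in Proposition 3.7.1 and Proposition 3.8.1 (which themselves rest on Theorem 3.4.1 and, in the 312-avoiding case, on Lemma 3.6.1). So the proof will be a short deduction.

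For the ``if'' direction, I would suppose $w$ is 312-avoiding and invoke Corollary 3.7.2, which asserts that $\mathcal{D}_\lambda(w)$ is a convex polytope whenever $\lambda$ has the form $(d,\dots,d,d-1,\dots,2,1)$ for some $1 \le d \le n$. Taking $d = n$ gives exactly the staircase $\lambda = (n,n-1,\dots,2,1)$, so $\mathcal{D}_\lambda(w)$ is a convex polytope. For the ``only if'' direction I would argue by contraposition using Proposition 3.8.1, which is stated for precisely $\lambda = (n,n-1,\dots,2,1)$: if $w$ is 312-containing, that proposition produces tableaux $Y_\lambda(v), Y_\lambda(w') \in \mathcal{D}_\lambda(w)$ together with a lattice point $T(c)$ on the segment joining them with $T(c) \notin \mathcal{D}_\lambda(w)$, which violates the line-segment property of convex polytopes recorded in Section 3.6. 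Hence if $\mathcal{D}_\lambda(w)$ is a convex polytope, then $w$ must be 312-avoiding. Combining the two implications yields the biconditional.

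The only point to watch is pure bookkeeping: one should check that the $d = n$ instance of the family $(d,\dots,d,d-1,\dots,2,1)$ appearing in Corollary 3.7.2 literally coincides with the shape $(n,n-1,\dots,2,1)$ treated in Proposition 3.8.1, which it does by definition. Accordingly, I expect no genuine obstacle in this final step — the difficulty has already been absorbed into the proofs of Propositions 3.7.1 and 3.8.1.
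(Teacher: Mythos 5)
Your proposal is correct and matches the paper's own treatment: Theorem 3.9.1 is obtained there exactly by combining the $d = n$ case of Corollary 3.7.2 with Proposition 3.8.1 (the latter giving the contrapositive of ``only if'' via the non-convexity witness $u(r) = T(c)$). No gaps; the argument is the same routine assembly of the two prior results.
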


In Theorem 25 of [RS], Reiner and Shimozono present a sufficient condition on $w$ expressed with a tableau for a Demazure character to equal a flagged Schur polynomial.  The hypothesis of Proposition 3.7.1 translates that condition into the more common 312-avoiding condition for a permutation.  In addition, Proposition 3.7.1 shows that when $w$ is 312-avoiding, the set of actual tableaux used to produce the Demazure character is exactly the set of tableaux which will be shown in a forthcoming paper to produce the corresponding flagged Schur polynomial.  Theorem 25 of [RS] is a statement at only the character level.  (However, Theorem 25 of [RS] holds for general $n$-partitions $\lambda$, whereas Proposition 3.7.1 is limited to the ``partial staircase shapes'' $\lambda = (d, ... , d, d-1, ... , 2, 1)$ for $1 \leq d \leq n$.)  We hope to extend Proposition 3.7.1 to general shapes in a future paper.

Theorem 25 of [RS] also presents a necessary condition for a Demazure character to equal a flagged Schur polynomial.  The contrapositive of Proposition 3.8.1 translates that condition into the 312-avoiding condition for a permutation.  Proposition 3.8.1 implies that the set of Demazure tableaux for a 312-containing $w$ is not equal to any set of flagged Schur tableaux.  (Again, Theorem 25 of [RS] holds for general $\lambda$, while Proposition 3.8.1 holds only for $\lambda = (n, n-1, ... , 2, 1)$.)

After formulating Propositions 3.7.1 and 3.8.1, we learned of the paper [PS] by Postnikov and Stanley.  Their Theorem 14.1 also improves upon the statement of Theorem 25 of [RS] by substituting the 312-avoiding hypothesis to imply that the Demazure character for $\lambda$ and $w$ is a flagged Schur polynomial.  In Section 15 of [PS], Postnikov and Stanley note that the set of tableaux contributing to a flagged Schur polynomial is a convex polytope in $\mathbb{R}^{l(w)}$, as was observed by Kogan [Ko].  However, since their Theorem 14.1 was presented at the character level, Postnikov and Stanley were unable to view the set of Demazure tableaux themselves as a convex polytope in $\mathbb{R}^d$ for some $d$.  Our Theorem 3.9.1 completely characterizes for which $w \in W$ the set of Demazure tableaux for $\lambda = (n, n-1, ... , 2, 1)$ is a convex polytope in $\mathbb{R}^{|\lambda|}$.

\newpage

\end{document}